\newcolumntype{C}[1]{>{\centering}m{#1}}
\newtheorem{definition}{Definition}
\newtheorem{theorem}{Theorem}
\newtheorem{proposition}{Proposition}
\newtheorem{remark}{Remark}
\newtheorem{example}{Example}
\newcommand{\tr}{{{\mathsf T}}}
\newcommand{\PSD}{\mbox{\it PSD}}
\newcommand{\SOS}{\mbox{\it SOS}}
\newcommand{\SSOS}{\mbox{\it SSOS}}
\newcommand{\DSOS}{\mbox{\it DSOS}}
\newcommand{\SDSOS}{\mbox{\it SDSOS}}
\title{\LARGE \bf
Sparse sum-of-squares (SOS) optimization: A bridge between DSOS/SDSOS and SOS optimization for sparse polynomials
}
\author{Yang Zheng$^{\dagger}$, Giovanni Fantuzzi$^{\ddagger}$, and Antonis~Papachristodoulou$^{\dagger}$
\thanks{Y. Zheng is supported by the Clarendon and the Jason Hu Scholarships. G. Fantuzzi is supported by an EPSRC Doctoral Prize Fellowship. The authors would like to thank Dr. Aivar Sootla at the University of Oxford for many fruitful discussions.
}
\thanks{$^{\dagger} $Y. Zheng, and A. Papachristodoulou are with Department of Engineering Science at the University of Oxford. (E-mail:        \{yang.zheng, antonis\}@eng.ox.ac.uk)}%
\thanks{$^{\ddagger}$G. Fantuzzi is with Department of Aeronautics, Imperial College London, South Kensington Campus. (E-mail:        gf910@ic.ac.uk)}
}
\begin{document}

\maketitle
\thispagestyle{empty}
\pagestyle{empty}

\begin{abstract}
	 Optimization over non-negative polynomials is fundamental for nonlinear systems analysis and control. We investigate the relation between three tractable relaxations for  optimizing over \textit{sparse} non-negative polynomials: sparse sum-of-squares (SSOS) optimization, diagonally dominant sum-of-squares (DSOS) optimization, and scaled diagonally dominant sum-of-squares (SDSOS) optimization. We prove that the set of SSOS polynomials, an inner approximation of the cone of SOS polynomials, strictly contains the spaces of sparse DSOS/SDSOS polynomials. When applicable, therefore, SSOS optimization is less conservative than its DSOS/SDSOS counterparts. Numerical results for large-scale sparse polynomial optimization problems demonstrate this fact, and also that SSOS optimization can be faster than DSOS/SDSOS methods despite requiring the solution of semidefinite programs instead of less expensive linear/second-order cone programs.
\end{abstract}

\section{Introduction}

Optimization over non-negative polynomials plays a fundamental role in analysis and control of systems with polynomial dynamics. For instance, the construction of polynomial Lyapunov or Lyapunov-type functions subject to suitable polynomial inequalities can prove nonlinear stability of equilibrium solutions~\cite{papachristodoulou2002construction}, approximate basins of attraction~\cite{Tan2006a}, and provide bounds on infinite-time averages~\cite{Chernyshenko2014a,Goluskin2016e,Fantuzzi2016b}.

Since deciding whether a given polynomial $p(x)$ is non-negative is NP-hard in general, a popular alternative is to look for a decomposition of $p(x)$ as a sum-of-squares (SOS) of polynomials with lower degree. Checking this sufficient condition for non-negativity is attractive because it amounts to solving a semidefinite program (SDP)~\cite{parrilo2003semidefinite,lasserre2001global}, a well-known type of convex optimization problem for which polynomial-time algorithms exist. However, the dimension of this SDP typically grows in a combinatorial fashion as the number of variables and the polynomial degree increase~\cite{parrilo2003semidefinite}, and very large SDPs are required to solve even when one employs well-known dimension reduction techniques, such as the Newton polytope~\cite{reznick1978extremal}, diagonal inconsistency~\cite{lofberg2009pre}, and symmetry~\cite{gatermann2004symmetry} or facial reduction~\cite{permenter2014basis}. Consequently, SOS-based analysis is only practical for polynomial dynamical systems with few states and/or low degree.

In order to improve scalability, it has been proposed by many authors to replace positivity certificates based on SOS representations with other sufficient conditions for non-negativity, which are stronger but have a lower computational complexity. For correlatively sparse polynomials, characterized by sparse couplings between different independent variables, Waki \textit{et al.}~\cite{waki2006sums} proposed to look for a decomposition as a sum of SOS polynomials, each involving only small subsets of the independent variables. While being  more restrictive, the search for such a sparse sum-of-squares (SSOS) decomposition can be carried out at a fraction of the computational cost required for a standard SOS decomposition, because an SDP with one large matrix variable is replaced with an SDP with multiple much smaller matrix variables. The latter can be solved more efficiently, a fact that also underpins the more recent sparse-BSOS~\cite{weisser2018sparse} and multi-ordered Lasserre relaxation hierarchies~\cite{josz2018lasserre} for sparse polynomial optimization. Two other alternatives to SOS optimization, applicable also to polynomials without correlative sparsity, were put forward by Ahmadi and Majumdar~\cite{ahmadi2017dsos}, who observed that the cones of \emph{diagonally dominant sum-of-squares} (DSOS) polynomials and of \emph{scaled diagonally dominant sum-of-squares} (SDSOS) polynomials are strict subsets of the cone of SOS polynomials. Optimization problems over DSOS and SDSOS polynomials can be recast as linear programs (LPs) and second-order cone programs (SOCP), respectively, and both of these can be solved with algorithms that scale more favourably than those for SDPs~\cite{ahmadi2017dsos}. On the other hand, DSOS/SDSOS optimization might be very conservative (although the conservatism may be reduced using iterative methods based on basis pursuit~\cite{ahmadi2015sum} or column generation~\cite{ahmadi2017optimization}).

The availability of such a variety of approaches poses a simple but important dilemma: when more than one method can be applied, which one should be used? To answer this question, theoretical results comparing the degree of conservatism and computational complexity for each of the aforementioned approaches would be desirable. In this paper, therefore, 
we study the relation between DSOS/SDSOS/SSOS positivity certificates for polynomials with correlative sparsity. Specifically, we prove that if a DSOS/SDSOS decomposition exists \textit{and} the correlative sparsity is \textit{chordal} (meaning that it can be represented by a chordal graph), then an SSOS decomposition is also available. In other words, the cones of DSOS/SDSOS polynomials with chordal correlative sparsity are strictly contained within the cone of polynomials that admit an SSOS decomposition in the sense of Waki \textit{et al.}~\cite{waki2006sums}. Thus, for polynomials with chordal correlative sparsity, DSOS/SDSOS optimization is provably more conservative than SSOS optimization (at least when the iterative improvement techniques for DSOS/SDSOS optimization mentioned above are not utilised). 
Also, SSOS optimization promises better scalability compared to standard SOS optimization. Therefore, we argue that SSOS optimization is a suitable candidate to bridge the gap between DSOS/SDSOS and SOS optimization. 

The rest of this paper is organized as follows. Section~\ref{Section:Preliminaries} reviews some basic facts about SOS/DSOS/SDSOS polynomials, useful notions from graph theory, and correlatively sparse polynomials. We give a new interpretation of the positivity certificates of Waki \textit{et al}~\cite{waki2006sums} in Section~\ref{Section:SparseSOSWaki}, which enables the connection to DSOS/SDSOS conditions for correlatively sparse polynomials in Section~\ref{Section:ConnectingSSO-SDSOS}. In Section~\ref{Section:MatrixAnalysis}, we extend our analysis to the cones of sparse DSOS/SDSOS/SSOS polynomial matrices.
Section~\ref{Section:Results} presents numerical results. Finally, Section~\ref{Section:Conclusion} concludes the paper. 

\section{Preliminaries}
\label{Section:Preliminaries}

\subsection{SOS, DSOS, and SDSOS polynomials}
\label{Section:SOS-DSOS-SDSOS}

Given a vector of variables $x \in \mathbb{R}^n$ and a multi-index $\alpha \in \mathbb{N}^n$, the quantity $x^{\alpha} := \prod_{i=1}^{n}x_i^{\alpha_i}$ is a monomial of degree $\vert \alpha \vert := \sum_{i=1}^n \alpha_i$. For $d \in \mathbb{N}$, we let $\mathbb{N}^n_d = \{\alpha \in \mathbb{N}^n: \vert \alpha \vert \leq d\}$. An $n$-variate polynomial of degree $2d$ can be written as $p(x) = \sum_{ \alpha \in \mathbb{N}^n_{2d}} c_{\alpha} x^{\alpha}$. We denote the set of polynomials in $n$ variables with real coefficients of degree no more than $2d$ by $\mathbb{R}[x]_{n,2d}$, and the subset of nonnegative polynomials in $\mathbb{R}[x]_{n,2d}$ by $\PSD_{n,2d}$.

Checking if $p(x) \in \PSD_{n,2d}$ is NP-hard already for polynomials of degree 4, 
but it is computationally tractable to test membership to the following subsets of $\PSD_{n,2d}$.

\textit{(1) SOS polynomials:} A polynomial $p(x) \in \mathbb{R}[x]_{n,2d}$ is an SOS polynomial if there exist $f_i \in \mathbb{R}[x]_{n,d}$, $i = 1,\,\ldots,\,s$ such that
\begin{equation*}
p(x) = \sum_{i=1}^s f^2_i(x).
\end{equation*}
We denote the set of $n$-variate SOS polynomials of degree no larger than $2d$ by $\SOS_{n,2d}$.
It is known~\cite{parrilo2003semidefinite} that $p(x) \in \SOS_{n,2d}$ if and only if there exists a PSD matrix $Q$ (denoted $Q\succeq 0$), such that
\begin{equation} \label{E:MatrixQ}
p(x) = v_d(x)^\tr \, Q \, v_d(x),
\end{equation}
where
$
v_d(x) = [ 1,x_1,x_2,\ldots,x_n,x_1^2,x_1x_2,\ldots,x_n^d ]^\tr
$ 
is the vector of monomials of $x$ of degree $d$ or less. Following~\cite{parrilo2003semidefinite}, we refer to~\eqref{E:MatrixQ} as the \emph{Gram matrix} representation of the polynomial $p(x)$. Note that the size of the Gram matrix $Q$ is ${n+d \choose d} \times {n+d \choose d}$ in general.

\textit{(2) DSOS polynomials:} Recall that a symmetric matrix $A \in \mathbb{S}^r$ is diagonally dominant (DD) if $A_{ii} \geq \sum_{j=1}^r |A_{ij}|$ for all $i = 1,\,\ldots,\,r$, and that DD matrices are positive semidefinite (this directly follows, for example, from Gershgorin's circle theorem). Following~\cite{ahmadi2017dsos}, we say that polynomial $p(x) \in \mathbb{R}[x]_{n,2d}$ is a \textit{diagonally dominant sum-of-squares} (DSOS) if it admits a Gram matrix representation~\eqref{E:MatrixQ} with a DD Gram matrix $Q$. We denote the set of DSOS polynomials in $n$ variables and degree no larger than $2d$ by $\DSOS_{n,2d}$.

\textit{(3) SDSOS polynomials:} Recall that a symmetric matrix $A \in \mathbb{S}^r$ is scaled diagonally dominant (SDD) if there exists a positive definite $r\times r$ diagonal matrix $D$ such that $DAD$ is diagonally dominant. Following~\cite{ahmadi2017dsos}, we say that polynomial $p(x) \in \mathbb{R}[x]_{n,2d}$ is a \textit{scaled diagonally dominant sum-of-squares} (SDSOS) if it admits a Gram matrix representation~\eqref{E:MatrixQ} with an SDD Gram matrix $Q$. We denote the set of SDSOS polynomials in $n$ variables and degree no larger than $2d$ by $\SDSOS_{n,2d}$.

Let {$p_0,\,\ldots,\,p_t \in \mathbb{R}[x]_{n,2d}$} be given polynomials. An SOS optimization problem takes the standard form
\begin{equation}
\label{E:generalSOS}
    \begin{aligned}
        \min_{u}\quad & w^\tr u \\[-1ex]
        \text{subject to} \quad  & p_0(x) + \sum_{i=1}^t u_ip_i(x) \in \SOS_{n,2d},
    \end{aligned}
\end{equation}
where $u \in \mathbb{R}^t$ is the decision variable. It is not difficult to see that~\eqref{E:MatrixQ} enables one to recast~\eqref{E:generalSOS} as an SDP~\cite{parrilo2003semidefinite}. It has also been proved that if $\SOS_{n,2d}$ is replaced with $\DSOS_{n,2d}$ in~\eqref{E:generalSOS} (resp. $\SDSOS_{n,2d}$), then one obtains an LP (resp. SOCP)~\cite{ahmadi2017dsos}. 
Thus, DSOS/SDSOS optimization are more scalable alternatives to SOS optimization, but are typically more conservative since the strict inclusion $\DSOS_{n,2d}\subset \SDSOS_{n,2d} \subset \SOS_{n,2d}$ holds.

\subsection{Chordal graphs and sparse matrices}


The sparsity of a polynomial $p(x) \in \mathbb{R}[x]_{n,2d}$, or of any of its possible Gram matrices, can be conveniently represented by an undirected graph $\mathcal{G}(\mathcal{V},\mathcal{E})$, that is, a set of nodes $\mathcal{V}=\{1,2,\dots, n\}$ and a set of edges $\mathcal{E} \subseteq \mathcal{V} \times \mathcal{V}$ such that $(i,j) \in \mathcal{E}$ implies that $(j,i) \in \mathcal{E}$ also. For this reason, it is useful to review some essential notions about graphs and their relation to sparse matrices.

A \emph{cycle} of length $k$ of an undirected graph $\mathcal{G}(\mathcal{V},\mathcal{E})$ is a sequence of nodes $\{v_1, v_2, \ldots, v_k\} \subseteq \mathcal{V}$ such that $(v_k, v_{1}) \in \mathcal{E}$ and $(v_i, v_{i+1}) \in \mathcal{E}$ for all $i = 1,\,\ldots,\,k-1$. A \emph{chord} in a cycle is an edge $(v_i,v_j)$ between nonconsecutive nodes of the cycle, and an undirected graph is called \emph{chordal} if all its cycles with length $\geq 4$ have a chord. Simple examples of chordal graphs are given in Fig.~\ref{F:ChordalGraph}. Note that any non-chordal graph $\mathcal{G}(\mathcal{V},\mathcal{E})$ can always be extended to a chordal graph $\mathcal{G}(\mathcal{V},\hat{\mathcal{E}})$ by adding edges to $\mathcal{E}$~\cite{vandenberghe2014chordal}.
Finally, a \emph{clique} $\mathcal{C} \subseteq \mathcal{V}$ is a subset of nodes where $(i,j) \in \mathcal{E}, \forall i,j \in \mathcal{C}, i \neq j$. If a clique $\mathcal{C}$ is not included in any other clique, then it is referred to as a \emph{maximal clique}. For example, the graph in Fig.~\ref{F:ChordalGraph}(c) has maximal cliques $\{1, 2, 3\}$ and $\{2, 3, 4\}$, while that in Fig.~\ref{F:ChordalGraph}(b) has maximal cliques $\{1, i+1\}$, $i = 1,\,\ldots,\,4$.

\begin{figure}[t]
	\centering
	\footnotesize
	\begin{tikzpicture}
	\matrix (m) [matrix of nodes,
	row sep = 0.8em,	
	column sep = 1em,	
	nodes={circle, draw=black}] at (-2.8,0)
	{ &  & \\ 1 & 2 & 3 \\ & &\\};
	\draw (m-2-1) -- (m-2-2);
	\draw (m-2-2) -- (m-2-3);
	\node at (-2.8,-1.3) {(a)};
	\matrix (m2) [matrix of nodes,
	row sep = 0.8em,	
	column sep = 1em,	
	nodes={circle, draw=black}] at (0,0)
	{ & 3 & \\ 2 & 1 & 4 \\& 5 &\\};
	\draw (m2-1-2) -- (m2-2-2);
	\draw (m2-2-1) -- (m2-2-2);
	\draw (m2-2-2) -- (m2-2-3);
	\draw (m2-2-2) -- (m2-3-2);
	\node at (0,-1.3) {(b)};
	
	\matrix (m3) [matrix of nodes,
	row sep = 0.8em,	
	column sep = 1.em,	
	nodes={circle, draw=black}] at (2.8,0)
	{ & 2 & \\ 1 &  & 4 \\& 3 &\\};
	\draw (m3-1-2) -- (m3-2-1);
	\draw (m3-1-2) -- (m3-2-3);
	\draw (m3-2-1) -- (m3-3-2);
	\draw (m3-2-3) -- (m3-3-2);
	\draw (m3-1-2) -- (m3-3-2);
	\node at (2.8,-1.3) {(c)};
	\end{tikzpicture}
	\caption{Examples of chordal graphs: (a) a line graph; (b) a star graph; (c) a triangulated graph.}
	\label{F:ChordalGraph}
\end{figure}
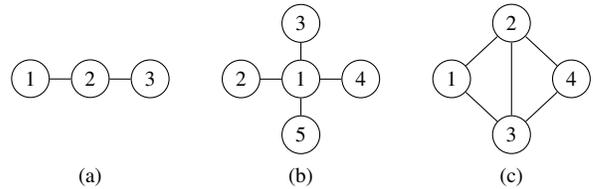

Given an undirected graph $\mathcal{G}(\mathcal{V},\mathcal{E})$, we consider the extended set of edges $\mathcal{E}^* = \mathcal{E} \cup\{(i,i), \forall i \in \mathcal{V}\}$ and define the space of $n \times n$ symmetric matrices with sparsity pattern characterized by $\mathcal{G}$ as
\begin{equation} \label{E:SparseSymMatrix}
\mathbb{S}^n(\mathcal{E},0) := \{X \in \mathbb{S}^{n} | X_{ij} = X_{ji}= 0\; \text{if} \; (j,i) \notin {\mathcal{E}}^* \}.
\end{equation}
Similarly the cone of sparse positive semidefinite (PSD) matrices with sparsity pattern described by $\mathcal{G}$ is
\begin{equation} \label{E:SparsePSD}
\mathbb{S}^n_+(\mathcal{E},0) := \{X \in \mathbb{S}^{n}(\mathcal{E},0) | X \succeq 0 \}.
\end{equation}
For simplicity, we will slight abuse the terminology and say that $\mathbb{S}^n(\mathcal{E},0)$ has sparsity pattern $\mathcal{E}$.

Finally, for each maximal clique $\mathcal{C}_k$ of $\mathcal{G}$ we define an index matrix $E_{\mathcal{C}_k} \in \mathbb{R}^{|\mathcal{C}_k| \times r}$ as
\begin{equation}
\label{E:IndexMatrix}
(E_{\mathcal{C}_k})_{ij} := \begin{cases} 1, \quad \text{if } \mathcal{C}_k(i) = j, \\ 0, \quad \text{otherwise}, \end{cases}
\end{equation}
where $|\mathcal{C}_k|$ denotes the number of nodes in $\mathcal{C}_k$, and $\mathcal{C}_k(i)$ denotes the $i$-th node in $\mathcal{C}_k$, sorted in the natural ordering. Then, $X_k = E_{\mathcal{C}_k}XE_{\mathcal{C}_k}^\tr \in \mathbb{S}^{|\mathcal{C}_k|}$ corresponds to the principal submatrix of $X$ defined by the indices in $\mathcal{C}_k$, while the operation $E_{\mathcal{C}_k}^\tr X_kE_{\mathcal{C}_k}$ ``inflates'' $X_k$ into a $\vert\mathcal{C}_k\vert \times \vert \mathcal{C}_k\vert $ matrix into a sparse $r\times r$ matrix.
The following key result states that when the graph $\mathcal{G}$ is chordal, then $X \in \mathbb{S}^r_{+}(\mathcal{E},0)$ if and only if it can be written as a sum of ``inflated'' PSD matrices.
\begin{theorem}[\!\!\cite{agler1988positive,griewank1984existence}]\label{T:ChordalDecompositionTheorem}
	Let $\mathcal{G}(\mathcal{V},\mathcal{E})$ be a chordal graph with maximal cliques $\mathcal{C}_1,\,\mathcal{C}_2,\,\ldots,\,\mathcal{C}_t$. Then, 
\end{theorem}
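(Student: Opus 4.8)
The plan is to prove both implications of the stated equivalence, namely that $X \in \mathbb{S}^r_+(\mathcal{E},0)$ if and only if there exist positive semidefinite matrices $X_k \in \mathbb{S}^{|\mathcal{C}_k|}$, $k=1,\ldots,t$, with $X = \sum_{k=1}^t E_{\mathcal{C}_k}^\tr X_k E_{\mathcal{C}_k}$. The ``if'' direction is immediate: for any $v \in \mathbb{R}^r$ and any $k$ we have $v^\tr E_{\mathcal{C}_k}^\tr X_k E_{\mathcal{C}_k} v = (E_{\mathcal{C}_k}v)^\tr X_k (E_{\mathcal{C}_k}v) \geq 0$, so each inflated term is PSD and hence so is their sum; moreover each term is supported on the clique $\mathcal{C}_k$, so its nonzero entries index pairs in $\mathcal{E}^*$, and therefore $X \in \mathbb{S}^r(\mathcal{E},0)$. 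Chordality is not needed here, so the content of the theorem lies entirely in the converse.

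For the ``only if'' direction I would argue by induction on the number $t$ of maximal cliques, exploiting the fact that a chordal graph admits a clique tree satisfying the running intersection property. The base case $t=1$ is trivial, since then $\mathcal{C}_1 = \mathcal{V}$, $E_{\mathcal{C}_1}$ is the identity, and $X$ is itself PSD. For the inductive step I would pick a leaf clique $\mathcal{C}_t$ of the clique tree with parent $\mathcal{C}_p$, set the separator $S = \mathcal{C}_t \cap \mathcal{C}_p$, and write $U = \mathcal{C}_t \setminus S$ and $W = \mathcal{V} \setminus \mathcal{C}_t$. The running intersection property guarantees that the vertices in $U$ appear in no other clique and that $S$ separates $U$ from $W$, so after reordering the rows and columns in the order $(U,S,W)$ the blocks of $X$ coupling $U$ and $W$ vanish.

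The key step is to peel off the leaf clique using a Schur complement. I would define $X_t \in \mathbb{S}^{|\mathcal{C}_t|}$ to agree with $X$ on every entry indexed by $U$ and to carry $X_{SU}X_{UU}^{+}X_{US}$ in its diagonal block on $S$, which renders $X_t \succeq 0$ because the principal submatrix of $X$ on $\mathcal{C}_t = U \cup S$ is PSD and hence satisfies the required range inclusion. Subtracting $Y = E_{\mathcal{C}_t}^\tr X_t E_{\mathcal{C}_t}$ then annihilates every row and column indexed by $U$, and the surviving principal block of $Z = X - Y$ on $S \cup W$ is precisely the Schur complement of $X_{UU}$ in $X$ (the cross terms drop out because the blocks coupling $U$ and $W$ vanish), which is PSD since $X \succeq 0$. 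Thus $Z \in \mathbb{S}^r_+(\mathcal{E}',0)$, where $\mathcal{E}'$ is the edge set of the subgraph induced by $\mathcal{V}\setminus U$; this subgraph is chordal with maximal cliques exactly $\mathcal{C}_1,\ldots,\mathcal{C}_{t-1}$, because $U$ is private to $\mathcal{C}_t$ while $S$ stays inside $\mathcal{C}_p$. Applying the inductive hypothesis to $Z$ and adjoining the term $Y$ yields the desired decomposition.

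The main obstacle is the bookkeeping that justifies this split: I must verify carefully that the separator structure forces the blocks coupling $U$ and $W$ to vanish, that the pseudoinverse $X_{UU}^{+}$ behaves correctly (i.e., the range inclusions needed both for $X_t \succeq 0$ and for the Schur complement to be PSD hold, which follows from $X \succeq 0$), and that deleting $U$ leaves a chordal graph with precisely the remaining maximal cliques so that the induction closes. Everything else reduces to routine linear algebra.
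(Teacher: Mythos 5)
The paper states this result (Theorem~\ref{T:ChordalDecompositionTheorem}) as a known theorem imported from \cite{agler1988positive,griewank1984existence} and gives no proof of its own, so there is nothing internal to compare against; your argument should instead be measured against the cited literature, and there it is essentially the canonical proof. Your ``if'' direction is correct and, as you note, needs no chordality. Your ``only if'' direction --- induction on the number of maximal cliques via a clique tree with the running intersection property, peeling off a leaf clique $\mathcal{C}_t$ by placing $X_{UU}$, $X_{US}$, $X_{SU}$ and $X_{SU}X_{UU}^{+}X_{US}$ into a PSD summand and leaving the generalized Schur complement on $S\cup W$ --- is exactly the Griewank--Toint style argument. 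The bookkeeping points you flag all check out: the $U$--$W$ blocks of $X$ vanish because any edge between $U$ and $W$ would force a maximal clique containing a vertex private to $\mathcal{C}_t$ together with a vertex outside $\mathcal{C}_t$; the range inclusion $\mathrm{range}(X_{US})\subseteq\mathrm{range}(X_{UU})$ needed for both $X_t\succeq 0$ and the PSD-ness of the Schur complement follows from positive semidefiniteness of the principal submatrix on $\mathcal{C}_t$; the fill-in created in the $SS$ block stays inside the clique $S$, so the residual matrix respects the induced sparsity pattern on $\mathcal{V}\setminus U$; and the induced subgraph is chordal with maximal cliques exactly $\mathcal{C}_1,\ldots,\mathcal{C}_{t-1}$ since $U$ is private to $\mathcal{C}_t$ and $S\subseteq\mathcal{C}_p$. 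I see no gap.
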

\begin{equation*} 
X\in\mathbb{S}^r_{+}(\mathcal{E},0)
\quad \Leftrightarrow \quad
X = \sum_{k=1}^{t} E_{\mathcal{C}_k}^\tr X_k E_{\mathcal{C}_k}, \; X_k \in \mathbb{S}^{\vert \mathcal{C}_k \vert}_+.
\end{equation*}

\subsection{Correlatively sparse polynomials}

The notion of correlative sparsity was introduced by Waki \textit{et al.}~\cite{waki2006sums} to describe couplings between the variables $x_1,\,\ldots,\,x_n$ of a polynomial $p(x) = \sum_{\vert \alpha \vert \leq 2d} c_{\alpha} x^{\alpha}$. Key to this description is the so-called \textit{correlative sparsity matrix} (CSP matrix), a symmetric matrix ${\rm csp}(p) \in \mathbb{S}^n$ where
\begin{equation*}
[{\rm csp}(p)]_{ij} = \begin{cases}
1, \quad \text{if } i = j \text{ or } \exists \alpha \mid \alpha_i, \alpha_j \geq 1 \text{ and } c_{\alpha} \neq 0, \\
0, \quad \text{otherwise}. \\\end{cases}
\end{equation*}
For example, we have
\begin{equation*}
{\rm csp}(x_1^2 + x_2x_3^3)
= \begin{bmatrix}
1 & 0 & 0\\
0 & 1 & 1\\
0 & 1 & 1
\end{bmatrix}.
\end{equation*}

A polynomial  $p(x) \in \mathbb{R}[x]_{n,2d}$ is said to have a correlative sparsity pattern characterized by an undirected graph $\mathcal{G}(\mathcal{V},\mathcal{E})$ if ${\rm csp}(p)\in\mathbb{S}^n(\mathcal{E},0)$. It is then natural to define the vector space of polynomials with the same correlative sparsity as
\begin{equation*}
\mathbb{R}[x]_{n,2d}(\mathcal{E}) := \{ p\in \mathbb{R}[x]_{n,2d} \mid {\rm csp}(p) \in \mathbb{S}^n(\mathcal{E},0)\},
\end{equation*}
and its subset of sparse SOS polynomials as
\begin{equation*}
\SOS_{n,2d}(\mathcal{E}) := \mathbb{R}[x]_{n,2d}(\mathcal{E}) \cap \SOS_{n,2d}.
\end{equation*}
Throughout this paper, we assume that the correlative sparsity pattern $\mathcal{E}$ is chordal, or that a suitable chordal extension has been found.

\section{Revisiting sparse SOS decompositions}
\label{Section:SparseSOSWaki}

Suppose we wish to determine whether a correlatively sparse polynomial $p(x) \in \mathbb{R}[x]_{n,2d}(\mathcal{E})$ is non-negative using an SOS certificate, meaning that we seek a Gram matrix representation of the form
\begin{equation} \label{E:GramQscalar}
p(x) = v_d(x)^\tr \,Q \, v_d(x), \quad Q \succeq 0.
\end{equation}
Using multi-indices $\beta \in \mathbb{N}^n_d$ and $\gamma \in \mathbb{N}^n_d$ to index the entries of $Q$, the equality constraints in~\eqref{E:GramQscalar} can be rewritten as
\begin{equation*}
p(x)
= \sum_{\beta, \gamma \in \mathbb{N}^n_d} Q_{\beta, \gamma}x^{\beta + \gamma}
= \sum_{\alpha \in \mathbb{N}^n_{2d}} \left( \sum_{\beta + \gamma = \alpha}Q_{\beta, \gamma} \right) x^{\alpha}.
\end{equation*}
It is clear that, even though $p(x)$ is correlatively sparse, its Gram matrix $Q$ need not be sparse: the only requirement is that $\sum_{\beta + \gamma = \alpha}Q_{\beta, \gamma} = 0$ if  $p(x)$ does not contain the monomial $x^{\alpha}$. 
Nonetheless, in order to exploit correlative sparsity and reduce the cost of searching for a suitable PSD Gram matrix, we insist that $Q$ should be sparse by imposing that
$Q_{\beta,\gamma} = 0$ if the monomial $x^{\beta+\gamma}$ does not appears in any polynomial of $\mathbb{R}[x]_{n,2d}({\mathcal{E}})$. More precisely, we define
\begin{multline} \label{E:SparseScalarSOS}
        \SSOS_{n,2d}(\mathcal{E}) = \{p \in \mathbb{R}[x]_{n,2d} \mid \text{\eqref{E:GramQscalar} holds and } Q_{\beta,\gamma}=0\\
        \text{if } \exists (i, j) \notin  {\mathcal{E}} \text{ s.t. }
        \beta_i + \gamma_i \neq 0 \text{ and } \beta_j + \gamma_j \neq 0\}.
\end{multline}

Another method to exploit sparsity, proposed by Waki \textit{et al.}~\cite{waki2006sums}, is to search for a {sparse} SOS (SSOS) decomposition:
let $\mathcal{C}_1,\,\ldots,\,\mathcal{C}_t$ be the maximal cliques of ${\mathcal{G}}(\mathcal{V},{\mathcal{E}})$, let $E_{\mathcal{C}_k}$ be as in~\eqref{E:IndexMatrix} for all $k=1,\,\ldots,\,t$, and try to find PSD matrices $Q_1,\,\ldots,\,Q_t$ such that
\begin{equation}
p(x) = \sum_{k=1}^t v_d(E_{\mathcal{C}_k}x)^\tr \,Q_k \, v_d(E_{\mathcal{C}_k}x), \label{E:SSOS-waki}.
\end{equation}
In other words, one can try to write $p$ as a sum of SOS polynomials $p_k(E_{\mathcal{C}_k}x) := v_d(E_{\mathcal{C}_k}x)^\tr \,Q_k \, v_d(E_{\mathcal{C}_k}x)$, each of which depends only on the corresponding subset of variables $E_{\mathcal{C}_k}x$. Our first main result is to show that these two strategies---imposing that $Q$ is sparse according to~\eqref{E:SparseScalarSOS} and looking for the SSOS decomposition~\eqref{E:SSOS-waki}---are equivalent.

\begin{theorem} \label{T:scalarSOSDecompositionTheorem}
Let $\mathcal{G}(\mathcal{V},\mathcal{E})$ be a chordal graph  with maximal cliques $\{\mathcal{C}_1,\mathcal{C}_2, \ldots, \mathcal{C}_t\}$. Then, 
\vspace{-1mm}
    \begin{equation}
        p(x) \in \SSOS_{n,2d}(\mathcal{E}) \Leftrightarrow p(x)= \sum_{k=1}^{t} p_k(E_{\mathcal{C}_k}x),
    \end{equation}
    where $p_k(E_{\mathcal{C}_k}x)$ is an SOS polynomial in the subset of variables $E_{\mathcal{C}_k}x$.
\end{theorem}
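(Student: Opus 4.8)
The plan is to lift the statement from the level of polynomials to the level of their Gram matrices and then invoke the chordal decomposition of Theorem~\ref{T:ChordalDecompositionTheorem}. The starting observation is that the sparsity imposed on $Q$ in the definition~\eqref{E:SparseScalarSOS} of $\SSOS_{n,2d}(\mathcal{E})$ is itself a graph sparsity pattern, but on the index set of monomials rather than of variables. Writing $\mathrm{supp}(\beta) := \{i : \beta_i \neq 0\}$ and using that $\beta,\gamma$ have nonnegative entries so that $\mathrm{supp}(\beta+\gamma) = \mathrm{supp}(\beta) \cup \mathrm{supp}(\gamma)$, the constraint in~\eqref{E:SparseScalarSOS} says exactly that $Q_{\beta,\gamma}$ may be nonzero only when $\mathrm{supp}(\beta)\cup\mathrm{supp}(\gamma)$ is a clique of $\mathcal{G}$. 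I would therefore introduce a graph $\tilde{\mathcal{G}}$ whose nodes are the monomials indexing $v_d(x)$, with an edge joining $\beta$ and $\gamma$ precisely when $\mathrm{supp}(\beta)\cup\mathrm{supp}(\gamma)$ is a clique of $\mathcal{G}$; then $p \in \SSOS_{n,2d}(\mathcal{E})$ is equivalent to the existence of a PSD Gram matrix $Q$ for $p$ supported on $\tilde{\mathcal{G}}$. Monomials whose own support is not a clique force a vanishing diagonal entry of $Q$, hence a vanishing row and column by positive semidefiniteness, so they may be discarded from $\tilde{\mathcal{G}}$ at the outset.

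Both implications then follow from the two directions of Theorem~\ref{T:ChordalDecompositionTheorem}, once the maximal cliques of $\tilde{\mathcal{G}}$ are identified. For each maximal clique $\mathcal{C}_k$ of $\mathcal{G}$, set $\mathcal{B}_k := \{\beta \in \mathbb{N}^n_d : \mathrm{supp}(\beta) \subseteq \mathcal{C}_k\}$; these are precisely the monomials appearing in $v_d(E_{\mathcal{C}_k}x)$, so that selecting the rows $\mathcal{B}_k$ from $v_d(x)$ reproduces $v_d(E_{\mathcal{C}_k}x)$, i.e. $E_{\mathcal{B}_k}v_d(x) = v_d(E_{\mathcal{C}_k}x)$ with $E_{\mathcal{B}_k}$ the index matrix built as in~\eqref{E:IndexMatrix}. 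For the implication $(\Leftarrow)$ I would take PSD Gram matrices $Q_k$ of the clique polynomials $p_k$, inflate them to $E_{\mathcal{B}_k}^\tr Q_k E_{\mathcal{B}_k}$, and sum; the result is PSD, supported on $\tilde{\mathcal{G}}$, and a Gram matrix for $p$, so $p \in \SSOS_{n,2d}(\mathcal{E})$ — this direction uses only that a sum of inflated PSD matrices is PSD. For $(\Rightarrow)$ I would apply Theorem~\ref{T:ChordalDecompositionTheorem} to the sparse PSD Gram matrix $Q$ to obtain $Q = \sum_k E_{\mathcal{B}_k}^\tr Q_k E_{\mathcal{B}_k}$ with $Q_k \succeq 0$, and then read off $p = \sum_k v_d(E_{\mathcal{C}_k}x)^\tr Q_k v_d(E_{\mathcal{C}_k}x)$, which is the decomposition~\eqref{E:SSOS-waki}.

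The step carrying the real content, and the one I expect to be the main obstacle, is verifying the hypotheses that make Theorem~\ref{T:ChordalDecompositionTheorem} applicable to $Q$: that $\tilde{\mathcal{G}}$ is chordal and that its maximal cliques are exactly $\{\mathcal{B}_1,\ldots,\mathcal{B}_t\}$. That each $\mathcal{B}_k$ is a clique of $\tilde{\mathcal{G}}$ is immediate. To show every maximal clique of $\tilde{\mathcal{G}}$ has this form I would use a Helly-type argument: if $\beta^{(1)},\ldots,\beta^{(m)}$ are pairwise adjacent in $\tilde{\mathcal{G}}$, then any two distinct variables in $\bigcup_i \mathrm{supp}(\beta^{(i)})$ come from a pair whose joint support is a clique and hence are adjacent in $\mathcal{G}$, so the whole union is a clique of $\mathcal{G}$ and lies in some $\mathcal{C}_k$, placing the monomial clique inside $\mathcal{B}_k$. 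Maximality and distinctness of the $\mathcal{B}_k$ for $d \geq 1$ follow because $\mathcal{B}_k \subseteq \mathcal{B}_j$ would force $\mathcal{C}_k \subseteq \mathcal{C}_j$ through the degree-one monomials. Chordality of $\tilde{\mathcal{G}}$ I would then obtain by transporting a clique tree of $\{\mathcal{C}_k\}$: since $\mathcal{B}_k \cap \mathcal{B}_j = \{\beta : \mathrm{supp}(\beta) \subseteq \mathcal{C}_k \cap \mathcal{C}_j\}$, the map $\mathcal{C}_k \mapsto \mathcal{B}_k$ is monotone and preserves intersections, so the clique-intersection (running-intersection) property of a clique tree for the chordal graph $\mathcal{G}$ is inherited by $\{\mathcal{B}_k\}$, certifying that $\tilde{\mathcal{G}}$ is chordal. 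With these facts established, the two applications of Theorem~\ref{T:ChordalDecompositionTheorem} above close the argument.
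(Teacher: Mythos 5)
Your proposal is correct and follows essentially the same route as the paper: both reduce membership in $\SSOS_{n,2d}(\mathcal{E})$ to a chordal sparsity pattern on the Gram matrix over the monomial index set (your $\tilde{\mathcal{G}}$ coincides with the paper's hyper-graph $\mathcal{G}^d(\mathcal{V}^d,\mathcal{E}^d)$, and your $\mathcal{B}_k$ are the $\mathcal{C}_k^d$), and then invoke Theorem~\ref{T:ChordalDecompositionTheorem} in both directions via the identity $E_{\mathcal{C}_k^d}v_d(x)=v_d(E_{\mathcal{C}_k}x)$. The only difference is that you prove the chordality of the lifted graph and the identification of its maximal cliques directly (via the Helly-type union argument and transport of the running-intersection property), whereas the paper outsources exactly these facts to~\cite{zheng2018scalable}; your handling of monomials whose support is not a clique, via the vanishing diagonal and positive semidefiniteness, is a small point the paper leaves implicit.
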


\begin{proof}
		To prove the $\Rightarrow$ part, we show that the Gram matrix $Q$ has a chordal pattern when~\eqref{E:SparseScalarSOS} holds, 
so the chordal decomposition (Theorem~\ref{T:ChordalDecompositionTheorem}) can be applied to recover~\eqref{E:SSOS-waki}. The $\Leftarrow$ part will also follows from this. 

\paragraph{$\Rightarrow$} If $\beta$ and $\gamma$ are such that an entry $Q_{\beta,\gamma}$ is not required to vanish due to~\eqref{E:SparseScalarSOS}, then $(i,j) \in {\mathcal{E}}$ for all $i,j$ such that $\beta_i+\gamma_i \neq 0$ or $\beta_j+\gamma_j \neq 0$. Consequently, the set $\mathcal{C}_{\beta,\gamma} = \{i \in \mathcal{V} \mid \beta_i+\gamma_i \neq 0\}$ is a clique of the graph $\mathcal{G}(\mathcal{V},{\mathcal{E}})$ and is therefore contained in one of its maximal cliques $\mathcal{C}_1,\,\ldots,\,\mathcal{C}_t$. In other words, $\mathcal{C}_{\beta,\gamma} \subseteq \mathcal{C}_k$ for some $k \in \{1, \ldots, t\}$.  Clearly, this also implies that
		$$
		\{i \in\mathcal{V} \mid \beta_i \neq 0\} \subseteq \mathcal{C}_k, \quad
		\{ i \in\mathcal{V} \mid \gamma_i \neq 0\} \subseteq \mathcal{C}_k.
		$$
		Thus, if $\beta$ and $\gamma$ do not satisfy the condition in~\eqref{E:SparseScalarSOS}, then there exists a value $k \in \{1, \ldots, t\}$ such that
		\begin{equation}
		\label{E:Qnonzero}
		\beta,\gamma \in \mathcal{C}_k^d := \{\alpha \in \mathbb{N}^n_d \mid \alpha_i \neq 0 \implies i \in \mathcal{C}_k\}.
		\end{equation}
		At this stage, define a hyper-graph $\mathcal{G}^d(\mathcal{V}^d,\mathcal{E}^d)$ with the multi-indices $\mathcal{V}^d = \{ \alpha \in \mathbb{N}^{n}_{d} \mid x^{\alpha} \in v_d(x)\}$ as nodes and
		\begin{equation} \label{E:SuperEdge}
		\mathcal{E}^d = \bigcup_{k=1}^t \mathcal{C}^d_k \times \mathcal{C}^d_k
		\end{equation}
		as edges. Since $\mathcal{C}_1, \ldots, \mathcal{C}_t$ are the maximal cliques of the chordal graph $\mathcal{G}(\mathcal{V}, {\mathcal{E}})$, it can be shown~\cite{zheng2018scalable} that $\mathcal{G}^d(\mathcal{V}^d, \mathcal{E}^d)$ is also chordal, and that $\mathcal{C}^d_1, \ldots , \mathcal{C}^d_t$ are its maximal cliques.
		Moreover, given that condition~\eqref{E:Qnonzero} holds for some  $k \in \{1, \ldots, t\}$ for each pair $(\beta,\gamma)$ such that $Q_{\beta,\gamma}$ is not required to vanish by~\eqref{E:SparseScalarSOS}, the hyper-graph $\mathcal{G}^d(\mathcal{V}^d, \mathcal{E}^d)$ characterizes the sparsity pattern of the PSD matrix $Q$ in~\eqref{E:GramQscalar}, \emph{i.e.}, $Q \in  \mathbb{S}^N_+(\mathcal{E}^d, 0)$ with $N = {n+d \choose d}$. Thus, according to Theorem~\ref{T:ChordalDecompositionTheorem}, $Q$ in~\eqref{E:GramQscalar} can be decomposed as
		\begin{equation} \label{E:Qdecomposition}
		Q = \sum_{k=1}^t {E}_{{\mathcal{C}}_k^d}^\tr Q_k{E}_{{\mathcal{C}}_k^d},
		\end{equation}
		where $Q_k \in \mathbb{S}_+^{|{\mathcal{C}}_k^d|}$ for all $k = 1,\,\ldots,\,t$.
		Upon noticing that ${E}_{{\mathcal{C}}_k^d}v_d(x) = v_d(E_{\mathcal{C}_k}x)$ by virtue of the definition of $\mathcal{C}_k^d$ in~\eqref{E:Qnonzero}, we then obtain from~\eqref{E:GramQscalar} that
		\begin{equation*} 
		\begin{aligned}
		p(x) &= v_d(x)^\tr \left( \sum_{k=1}^t {E}_{{\mathcal{C}}_k^d}^\tr Q_k{E}_{{\mathcal{C}}_k^d} \right) v_d(x) \\
		& = \sum_{k=1}^t ({E}_{{\mathcal{C}}_k^d}v_d(x))^\tr Q_k ({E}_{{\mathcal{C}}_k^d}v_d(x)), \\
		& = v_d(E_{\mathcal{C}_k}x)^\tr \,Q_k \, v_d(E_{\mathcal{C}_k}x),
		\end{aligned}
		\end{equation*}
		which is exactly~\eqref{E:SSOS-waki} as claimed.

	\paragraph{$\Leftarrow$}This follows after rearranging the last set of equalities in a suitable way.
	\end{proof}

Note that, in fact, we have shown that $p(x) \in \SSOS_{n,2d}$ if and only if it admits a sparse Gram matrix $Q \in  \mathbb{S}^N_+(\mathcal{E}^d, 0)$, so searching for an SSOS decomposition~\eqref{E:SSOS-waki} amounts to imposing a sparsity constraint on the Gram matrix.

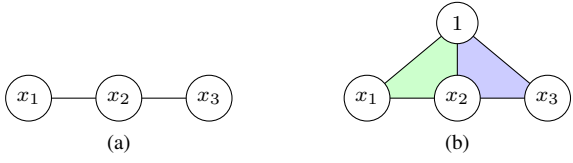
\begin{figure}[t]
    \centering
    \footnotesize
\begin{tikzpicture}

    \coordinate (s11) at (0,-1);
    \coordinate (s12) at (1.2,-1);
    \coordinate (s13) at (2.4,-1);
    \path[draw] (s11)--(s12);
    \path[draw] (s12)--(s13);
    \node[draw, circle,color=black, fill = white] at (s11) {$x_1$};
    \node[draw, circle,color=black, fill = white] at (s12) {$x_2$};
    \node[draw, circle,color=black, fill = white] at (s13) {$x_3$};

    \node at (1.2,-1.6) {(a)};

    \coordinate (s21) at (5.7,0);
    \coordinate (s22) at (4.5,-1);
    \coordinate (s23) at (5.7,-1);
    \coordinate (s24) at (6.9,-1);
   \begin{scope}[on background layer]
      \fill[green!20!white,on background layer] (s21) -- (s22) -- (s23) -- cycle;
      \fill[blue!20!white,on background layer] (s21) -- (s23) -- (s24) -- cycle;
   \end{scope}

   \path[draw] (s21)--(s22);
   \path[draw] (s21)--(s23);
   \path[draw] (s21)--(s24);
   \path[draw] (s22)--(s23);
   \path[draw] (s23)--(s24);
    \node[draw, circle, color=black,fill = white,minimum size=0.55cm] at (s21) {$1$};
    \node[draw, circle, color=black, fill=white] at (s22) {$x_1$};
    \node[draw, circle,color=black, fill=white] at (s23) {$x_2$};
     \node[draw, circle,color=black, fill=white] at (s24) {$x_3$};

   \node at (5.7,-1.6) {(b)};
\end{tikzpicture}
\caption{Graph patterns for polynomial~\eqref{E:ExampleScalar}: (a) the correlative sparsity pattern $\mathcal{G}(\mathcal{V},\mathcal{E})$ of~\eqref{E:ExampleScalar} is a line graph; (b) the corresponding super-graph $\mathcal{G}^d(\mathcal{V}^d,\mathcal{E}^d)$ is chordal with maximal cliques $\mathcal{C}_1^d = \{1,x_1,x_2\},\mathcal{C}_2^d = \{1,x_2,x_3\}$.}
    \label{F:ExampleScalar}
\end{figure}

\begin{example}
	Consider the quadratic polynomial
	\begin{align} \label{E:ExampleScalar}
	p(x) & = 2(1 + x_1 + x_3 + x_1^2 + x_1x_2 + x^2 + x_2x_3 + x_3^2)  \notag\\
	& = \begin{bmatrix} 1 \\ x_1 \\ x_2 \\ x_3\end{bmatrix}^\tr
	\underbrace{\begin{bmatrix} 2 & 1 & 0 & 1 \\
		1 & 2 & 1 & 0 \\
		0 & 1 & 2 & 1 \\
		1 & 0 & 1 & 2\end{bmatrix}}_{Q \succeq 0}\begin{bmatrix} 1 \\ x_1 \\ x_2 \\ x_3\end{bmatrix},
	\end{align}
	where ${\rm csp}(p)$ is a line graph with maximal cliques $\mathcal{C}_1=\{1,2\}$ and $\mathcal{C}_2=\{2,3\}$, as shown in Fig.~\ref{F:ExampleScalar}. The Gram matrix $Q$, which is unique in this case, is PSD and has a chordal sparsity pattern corresponding to the graph in Fig.~\ref{F:ExampleScalar}(b), and according to Theorem~\ref{T:ChordalDecompositionTheorem} it can be written as
	$$
	Q = \underbrace{\begin{bmatrix} 1 & 1 & 0 & 0 \\
		1 & 2 & 1 & 0 \\
		0 & 1 & 1 & 0 \\
		0 & 0 & 0 & 0\end{bmatrix}}_{\succeq 0} +
	\underbrace{\begin{bmatrix} 1 & 0 & 0 & 1 \\
		0  & 0 & 0 & 0 \\
		0 & 0 & 1 & 1 \\
		1 & 0 & 1 & 2\end{bmatrix}}_{\succeq 0}.
	$$
	Therefore, the 3-variate
	$p(x)$ can be written as the sum of the two bi-variate SOS polynomials: $p(x) = p_1(x_1,x_2) + p_1(x_2,x_3)$, where
	$$
	\begin{aligned}
	p_1(x_1,x_2) &= (1+x_1)^2 + (x_1+x_2)^2,\\
	p_1(x_2,x_3) &= (1+x_3)^2 + (x_2+x_3)^2.
	\end{aligned}
	$$
\end{example}

\vspace{2mm}

\section{Relating SSOS to sparse DSOS/SDSOS}
\label{Section:ConnectingSSO-SDSOS}


We have seen that finding an SSOS decomposition of a correlatively sparse polynomial $p\in \mathbb{R}[x]_{n,2d}(\mathcal{E})$ amounts to constraining the sparsity of its Gram matrix. This fact makes it possible to draw a connection between the space $\SSOS_{n,2d}(\mathcal{E})$ of sparse SOS polynomials and those of sparse DSOS/SDSOS polynomials, defined as
\begin{align*}
\DSOS_{n,2d}(\mathcal{E}) &:= \DSOS_{n,2d} \cap \mathbb{R}[x]_{n,2d}(\mathcal{E}),\\
\SDSOS_{n,2d}(\mathcal{E}) &:= \SDSOS_{n,2d} \cap \mathbb{R}[x]_{n,2d}(\mathcal{E}).
\end{align*}
Specifically, we have the following result.

\begin{proposition} \label{Proposition:ScalarInlcusion}
	For any sparsity pattern $\mathcal{E}$
	\begin{multline}
            \DSOS_{n,2d}(\mathcal{E}) \subset \SDSOS_{n,2d}(\mathcal{E}) \\
              \subset \SSOS_{n,2d}(\mathcal{E}) \subseteq \SOS_{n,2d}(\mathcal{E}),
    \end{multline}
and the first two inclusions are strict. The third inclusion is strict unless $\mathcal{E}$ is full or $d=1$, in which cases $\SSOS_{n,2d}(\mathcal{E}) \equiv \SOS_{n,2d}(\mathcal{E})$.
\end{proposition}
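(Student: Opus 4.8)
The plan is to prove the four-term chain by handling the three inclusions separately: the two outer inclusions follow almost directly from the definitions, while the middle inclusion $\SDSOS_{n,2d}(\mathcal{E})\subset\SSOS_{n,2d}(\mathcal{E})$ carries all the real work. The equality and strictness claims are then settled with explicit witnesses together with a uniqueness argument for the quadratic case.

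For the outer inclusions I would argue directly. The inclusion $\DSOS_{n,2d}(\mathcal{E})\subseteq\SDSOS_{n,2d}(\mathcal{E})$ is inherited from the matrix-cone inclusion (every DD matrix is SDD, by taking $D=I$) after intersecting with $\mathbb{R}[x]_{n,2d}(\mathcal{E})$. For $\SSOS_{n,2d}(\mathcal{E})\subseteq\SOS_{n,2d}(\mathcal{E})$, a Gram matrix satisfying the pattern in~\eqref{E:SparseScalarSOS} is in particular PSD, so $p$ is SOS; moreover each surviving term $x^{\beta+\gamma}$ has support contained in a single clique, whence $p\in\mathbb{R}[x]_{n,2d}(\mathcal{E})$ and membership in $\SOS_{n,2d}(\mathcal{E})$ follows.

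The key step is $\SDSOS_{n,2d}(\mathcal{E})\subset\SSOS_{n,2d}(\mathcal{E})$, and here I would exploit that an SDD matrix is exactly a sum of PSD matrices supported on $2\times2$ principal submatrices. Given $p\in\SDSOS_{n,2d}(\mathcal{E})$ with SDD Gram matrix $Q$, write $Q=\sum M^{(\beta,\gamma)}$ with each $M^{(\beta,\gamma)}\succeq0$ supported on the monomial indices $\{\beta,\gamma\}$, and let $\hat Q$ be the projection of $Q$ onto the pattern $\mathbb{S}^N(\mathcal{E}^d,0)$, i.e.\ zero out every entry indexed by a pair whose combined support is not a clique. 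The crucial point is that this projection acts blockwise and preserves positivity: if $\mathrm{supp}(\beta)\cup\mathrm{supp}(\gamma)$ is a clique the block is left untouched, and otherwise it collapses onto (a subset of) its own diagonal, which is nonnegative because $M^{(\beta,\gamma)}\succeq0$; hence $\hat Q$ is again SDD and lies in $\mathbb{S}^N_+(\mathcal{E}^d,0)$. It then remains to verify that $\hat Q$ represents the \emph{same} polynomial: the only entries discarded feed monomials $x^{\beta+\gamma}$ or $x^{2\beta}$ whose support is not a clique, i.e.\ monomials whose coefficient in the correlatively sparse $p$ is already zero; conversely every index pair summing to an allowed monomial is itself admissible and is retained, so no allowed coefficient changes. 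Thus $p=v_d(x)^\tr\hat Q\,v_d(x)$ with $\hat Q\in\mathbb{S}^N_+(\mathcal{E}^d,0)$, and the remark following Theorem~\ref{T:scalarSOSDecompositionTheorem} (SSOS membership is equivalent to a Gram matrix in $\mathbb{S}^N_+(\mathcal{E}^d,0)$) yields $p\in\SSOS_{n,2d}(\mathcal{E})$. I expect this bookkeeping---simultaneously tracking that forbidden diagonal entries are dropped and that their removal corrupts no allowed coefficient---to be the delicate point of the whole proposition.

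For the strictness of the first two inclusions it suffices to place separating examples inside a single node, so that correlative sparsity holds automatically for every $\mathcal{E}$: I would take a univariate polynomial that is SDSOS but not DSOS, for instance $x_1^4-3x_1^2+4$, whose admissible Gram matrices are all PSD yet none diagonally dominant, together with a univariate SOS-but-not-SDSOS polynomial such as $(x_1^2+x_1-1)^2$, for which one checks that no Gram matrix is SDD. These realize the strict matrix inclusions $\mathrm{DD}\subsetneq\mathrm{SDD}\subsetneq\PSD$ at the polynomial level. For the third inclusion, if $\mathcal{E}$ is full the pattern $\mathcal{E}^d$ imposes no constraint and $\SSOS_{n,2d}(\mathcal{E})\equiv\SOS_{n,2d}(\mathcal{E})$ trivially; if $d=1$ the Gram matrix of a quadratic is uniquely determined by its coefficients (each off-diagonal entry corresponds to the single monomial $x_ix_j$), so a sparse SOS quadratic automatically has a Gram matrix supported on $\mathcal{E}^1$, again forcing equality. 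Strictness in all remaining cases ($\mathcal{E}$ not full and $d\ge2$) requires a correlatively sparse polynomial that is globally SOS but admits no clique-supported SOS certificate; exhibiting such a witness is the main obstacle for this part, and I would construct it by running a coupling across a non-edge of $\mathcal{E}$ so as to force the SOS certificate to use monomials straddling two cliques.
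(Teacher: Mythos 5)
Your proposal is correct and follows essentially the same route as the paper: the heart of both arguments is to truncate the SDD Gram matrix $Q$ to the clique-supported pattern and verify that positivity and the represented polynomial survive, the only difference being that you certify positivity of $\hat Q$ via the $2\times 2$ PSD decomposition of SDD matrices while the paper argues directly that $\hat Q$ remains SDD (zeroed diagonal entries drag their entire row and column to zero). Your explicit univariate witnesses for the first two strict inclusions and your careful check that all index pairs summing to a given monomial are uniformly zeroed or retained actually go beyond what the paper writes out, and, like the paper, you leave the strictness of $\SSOS_{n,2d}(\mathcal{E})\subset\SOS_{n,2d}(\mathcal{E})$ for non-full $\mathcal{E}$ and $d\ge 2$ without an explicit witness.
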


\begin{proof}
We only need to prove that $\SDSOS_{n,2d}(\mathcal{E}) \subset \SSOS_{n,2d}(\mathcal{E})$; the rest is true by definition, and the identity $\SSOS_{n,2}(\mathcal{E}) \equiv \SOS_{n,2}(\mathcal{E})$ holds because the Gram matrix representation of correlatively sparse quadratic polynomials is unique and must be sparse. Recall that any $p \in \SDSOS_{n,2d}(\mathcal{E})$ can be represented by an SDD Gram matrix $Q$, not necessarily sparse. We then construct a sparse Gram matrix $\hat{Q}$ according to
	$$
	\hat{Q}_{\beta,\gamma} = \begin{cases}
	0 & \text{if } \exists (i,j) \notin {\mathcal{E}}, \beta_i + \gamma_i \neq 0, \beta_j + \gamma_j \neq 0,\\
	Q_{\beta,\gamma} & \text{otherwise}.
	\end{cases}
	$$
	It is not difficult to see that
	$p(x) = v_d(x) \hat{Q} v_d(x)$, and that
	$\hat{Q}$ is also SDD. Indeed,
	replacing any off-diagonal entries with zeros does not affect the scaled diagonal dominance of a matrix, while
	if a diagonal entry $Q_{\beta,\beta}$ is replaced by zero, then there exists $(i,j) \notin \mathcal{E}$ such that $\beta_i \neq 0$ and $\beta_j \neq 0$, and so the entire row $Q_{\beta,\bullet}$ and column $Q_{\bullet,\beta}$ are also replaced with zeros.
	Thus, $\hat{Q}$ satisfies the condition~\eqref{E:SparseScalarSOS} and $p(x) \in \SSOS_{n,2d}(\mathcal{E})$. Finally, the inclusion $\SDSOS_{n,2d}(\mathcal{E}) \subset \SSOS_{n,2d}(\mathcal{E})$ is strict because there exist SSOS polynomials whose Gram matrix is not SDD.
\end{proof}

The implication of Proposition~\ref{Proposition:ScalarInlcusion} is simple but important: $\SSOS_{n,2d}(\mathcal{E})$ is a strictly better inner approximation of $\SOS_{n,2d}(\mathcal{E})$, compared to the DSOS/SDSOS counterparts.
%
%
%
Consequently, given correlatively sparse polynomials {$p_0,\,\ldots,\,p_t \in \mathbb{R}[x]_{n,2d}(\mathcal{E})$} and an optimization variable $u \in \mathbb{R}^t$, the SOS optimization problem
\begin{equation}
\tag{$\mathcal{P}_{\rm sos}$}
\label{E:sparseSOS}
    \begin{aligned}
        f_{\rm sos}^* := \min_{u}\quad & w^\tr u \\[-1ex]
        \text{subject to} \quad  & p_0(x) + \sum_{i=1}^t u_ip_i(x) \in \SOS_{n,2d}(\mathcal{E}),
    \end{aligned}
\end{equation}
is better approximated if the cone $\SOS_{n,2d}(\mathcal{E})$ is replaced by $\SSOS_{n,2d}(\mathcal{E})$ instead of $\SDSOS_{n,2d}(\mathcal{E})$ or $\DSOS_{n,2d}(\mathcal{E})$. Specifically, if we denote the optimization problems arising in each of these cases by ${\mathcal{P}}_{\text{ssos}}$, ${\mathcal{P}}_{\text{sdsos}}$, and ${\mathcal{P}}_{\text{dsos}}$, and let $f^*_{\text{ssos}}$,  $f^*_{\text{sdsos}}$, and $f^*_{\text{dsos}} $ be their respective optimal values\footnote{For an infeasible problem, we denote the optimal cost value as infinity.}, Proposition~\ref{Proposition:ScalarInlcusion} implies that
\begin{equation}
f^*_{\text{dsos}} \geq  f^*_{\text{sdsos}} \geq   f^*_{\text{ssos}} \geq  f^*_{\text{sos}}.
\end{equation}

Additionally, it should be clear from Theorem~\ref{T:scalarSOSDecompositionTheorem} that the SSOS optimization problem $\mathcal{P}_{\rm ssos}$ can be recast as an SDP with multiple PSD matrix variables whose size is bounded by ${m + d \choose d}$, where $m$ is the size of the largest clique of the underlying correlative sparsity graph $\mathcal{G}(\mathcal{V},\mathcal{E})$. Therefore, even though problems ${\mathcal{P}}_{\text{dsos}}$ and ${\mathcal{P}}_{\text{sdsos}}$ can be solved as LPs and SOCPs~\cite{ahmadi2017dsos}, the added representation power offered by SSOS constraints need not add much computational cost when $m \ll n$. Table~\ref{Table:ScalarSOS} summarizes the problem types for SOS, SSOS, SDSOS, and SOS optimization. In fact, as will be demonstrated by the numerical examples in Section~\ref{Section:Results}, SSOS optimization can be much faster than DSOS/SDSOS optimization provided by the package SPOTless~\cite{tobenkin2013spotless}. Seen in this light, \textit{SSOS optimization bridges the gap between DSOS/SDSOS and SOS optimization for problems with correlatively sparse polynomials}.

\begin{table}[t]
    \centering
    \caption{Details of problem types for SOS, SSOS, SDSOS, and SOS optimization with degree $2d$ polynomials in $n$ variables. The value $m$ is the size of the largest clique of the underlying correlative sparsity graph $\mathcal{G}(\mathcal{V},\mathcal{E})$; for many problem instances, $m \ll n$.}
    \label{Table:ScalarSOS}
    \begin{tabular*}{\linewidth}{@{\extracolsep{\fill}}c r r c }
    \toprule[1pt]
     Problem & Cone & Program & Maximum PSD cone size  \\
    \midrule
    ${\mathcal{P}}_{\text{sos}}$  & $\SOS_{n,2d}(\mathcal{E})$ & SDP & ${n+d \choose d}$ \\
    ${\mathcal{P}}_{\text{ssos}}$ &  $\SSOS_{n,2d}(\mathcal{E})$ & SDP  &  ${m + d \choose d}$ \\
    ${\mathcal{P}}_{\text{sdsos}}$ &  $\SDSOS_{n,2d}(\mathcal{E})$ & SOCP  & 2 \\
    ${\mathcal{P}}_{\text{dsos}}$ &  $\DSOS_{n,2d}(\mathcal{E})$ & LP &  1 \\
    \bottomrule[1pt]
    \end{tabular*}
\end{table}

\section{Extension to sparse matrix-valued polynomials}
\label{Section:MatrixAnalysis}

The results of the previous sections can be extended to sparse matrix-valued polynomials, which arise naturally in some applications~\cite{scherer2006matrix,peet2009positive}. Let $\mathbb{R}[x]_{n,2d}^{r \times s}$ be the space of $r \times s$ matrices whose entries are polynomials in $\mathbb{R}[x]_{n,2d}$, and $\mathbb{S}[x]_{n,2d}^{r}$ be the space of $r \times r$ symmetric polynomial matrices. A symmetric polynomial matrix $P(x) \in \mathbb{S}[x]_{n,2d}^{r}$ is PSD if $P(x) \succeq 0, \forall\,x \in \mathbb{R}^n$, and it belongs to the space $\SOS_{n,2d}^{r}$ of SOS matrices if there exists $M \in \mathbb{R}[x]_{n,d}^{s \times r}$ such that $P(x) = M^\tr(x)M(x)$~\cite{gatermann2004symmetry,scherer2006matrix,kojima2003sums}.

Clearly, SOS matrices are PSD, and it is well-known (see, \emph{e.g.},~\cite{gatermann2004symmetry,scherer2006matrix,kojima2003sums}) that $P(x) \in \SOS_{n,2d}^{r}$ if and only if there exists a Gram matrix $Q \succeq 0$ such that
\begin{equation} \label{E:matrixSOS}
P(x) = \left(I_r \otimes v_d(x)\right)^\tr  Q \left(I_r \otimes v_d(x)\right),
\end{equation}
where $I_r$ is the $r\times r$ identity matrix and $\otimes$ is the usual Kronecker product.
Similarly to DSOS/SDSOS polynomials, we can define DSOS/SDSOS matrices as follows.
\begin{definition}
	A polynomial matrix $P \in \mathbb{S}[x]_{n,2d}^{r}$ is
	\begin{itemize}
		\item DSOS, denoted $P \in \DSOS_{n,2d}^r$, if it admits a Gram matrix representation~\eqref{E:matrixSOS} with a DD matrix $Q$.
		\item SDSOS, denoted $P \in \SDSOS_{n,2d}^r$, if it admits a Gram matrix representation~\eqref{E:matrixSOS} with an SDD matrix $Q$.
	\end{itemize}
\end{definition}

An alternative characterization of SOS/DSOS/SDSOS matrices can be given as follows~\cite{gatermann2004symmetry}.
\begin{proposition} \label{Proposition:Matrix&ScalarSOS}
	A polynomial matrix $P \in \mathbb{S}[x]_{n,2d}^{r}$ is  SOS (resp., DSOS or SDSOS) if and only if, given $y \in \mathbb{R}^r$, the polynomial $y^\tr P(x)y$ is SOS (resp. DSOS or SDSOS) in $[x;y] \in \mathbb{R}^{m+n}$.
\end{proposition}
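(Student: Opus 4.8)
The plan is to run both representations through the \emph{same} Gram matrix $Q$. The starting point is the elementary Kronecker identity
\[
(I_r \otimes v_d(x))\, y = y \otimes v_d(x), \qquad y \in \mathbb{R}^r,
\]
which holds because the $i$-th block of $(I_r \otimes v_d(x))y$ is precisely $y_i v_d(x)$. Substituting this into the matrix Gram representation~\eqref{E:matrixSOS} yields
\[
y^\tr P(x)\, y = \left(y \otimes v_d(x)\right)^\tr Q \left(y \otimes v_d(x)\right).
\]
The entries of the vector $y \otimes v_d(x)$ are exactly the monomials $y_i x^{\alpha}$ with $i \in \{1,\dots,r\}$ and $x^\alpha \in v_d(x)$, so the right-hand side is a genuine scalar Gram representation of $y^\tr P(x)y$ in the variables $[x;y]$, realised by the \emph{identical} matrix $Q$ and with no relabelling. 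Hence $Q$ is PSD (resp.\ DD, SDD) when viewed as a matrix-SOS Gram matrix if and only if it is PSD (resp.\ DD, SDD) when viewed as a scalar Gram matrix of $y^\tr P(x)y$.

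The forward implication is then immediate: if $P$ is SOS/DSOS/SDSOS, choose a Gram matrix $Q$ of the corresponding type in~\eqref{E:matrixSOS}, and the displayed identity exhibits $y^\tr P(x)y$ as a scalar polynomial possessing a PSD/DD/SDD Gram matrix, hence as an SOS/DSOS/SDSOS polynomial. The reverse implication requires one extra step, which I expect to be the main obstacle. When $y^\tr P(x)y$ is declared SOS/DSOS/SDSOS ``in $[x;y]$'', its Gram matrix $\tilde Q$ is a priori written with respect to the full monomial vector of degree $d+1$ in the $n+r$ variables, a basis far larger than $y \otimes v_d(x)$. I plan to cut this basis down using the homogeneity of $y^\tr P(x)y = \sum_{i,j} y_i y_j P_{ij}(x)$, which is homogeneous of degree exactly $2$ in $y$ and of degree at most $2d$ in $x$. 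A Newton-polytope argument (each square in an SOS decomposition has support contained in half the Newton polytope of the polynomial) forces every monomial occurring in a squared term to have the form $y_i x^{\alpha}$ with $|\alpha| \le d$; that is, the only admissible monomials are the entries of $y \otimes v_d(x)$.

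Consequently, the diagonal entries of $\tilde Q$ indexed by any other monomial must vanish, and for a PSD/DD/SDD matrix a zero diagonal entry forces the entire corresponding row and column to be zero---this is exactly the zero-diagonal reduction already used in the proof of Proposition~\ref{Proposition:ScalarInlcusion}. Deleting these rows and columns leaves a principal submatrix $Q$, still PSD/DD/SDD, expressed in the basis $y \otimes v_d(x)$; reading the identity backwards shows that $Q$ is a valid Gram matrix for $P(x)$ in~\eqref{E:matrixSOS} of the required type, so $P$ is SOS/DSOS/SDSOS. The two delicate points are (i) justifying the homogeneity/Newton-polytope reduction so that the scalar certificate may always be taken in the restricted basis $y \otimes v_d(x)$, and (ii) checking that the zero-diagonal elimination preserves not only positive semidefiniteness but also diagonal and scaled diagonal dominance. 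Point (ii) is routine, since deleting a zero row together with its symmetric column is a principal-submatrix operation under which all three cone memberships are inherited; point (i), where the degree-two homogeneity in $y$ does the essential work, is the step that warrants the most care.
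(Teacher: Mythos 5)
Your proposal is correct and follows essentially the same route as the paper: the identity $(I_r\otimes v_d(x))\,y = y\otimes v_d(x)$ lets one and the same matrix $Q$ serve simultaneously as the Gram matrix of $P(x)$ in~\eqref{E:matrixSOS} and as a scalar Gram matrix of $y^\tr P(x)y$ in the basis $y\otimes v_d(x)$, which is exactly the paper's computation. The only difference is that you explicitly justify the reverse implication---reducing a scalar certificate written in the full degree-$(d+1)$ monomial basis of $[x;y]$ down to the sub-basis $y\otimes v_d(x)$ via the degree-two homogeneity in $y$ (Newton polytope) followed by zero-diagonal elimination---a step the paper compresses into a single ``therefore''; your reduction is sound, since a PSD, DD, or SDD matrix with a vanishing diagonal entry has the entire corresponding row and column equal to zero and these three properties are inherited by principal submatrices.
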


\begin{proof}
	Using~\eqref{E:matrixSOS},
	$$
	\begin{aligned}
	y^\tr P(x)y & = y^\tr \left(I_r \otimes v_d(x)\right)^\tr  Q \left(I_r \otimes v_d(x)\right) y \\
	& = \left(I_r \otimes v_d(x) \cdot y \otimes 1\right)^\tr Q \left(I_r \otimes v_d(x) \cdot y \otimes 1\right) \\
	& = \left(y \otimes v_d(x) \right)^\tr Q \left(y \otimes v_d(x) \right) \\
	& = z(x,y)^\tr Q z(x,y),
	\end{aligned}
	$$
	where $z(x,y)=y \otimes v_d(x)$ is a subset of the vector of monomials in $x$ and $y$. Therefore,
	$P(x) \in \SSOS_{n,2d}^r$ (resp. $P(x) \in \DSOS_{n,2d}^r$ and $P(x) \in \SDSOS_{n,2d}^r$) if and only if $Q$ is PSD (resp., DD and SDD).
\end{proof}

\subsection{Sparse SOS, SDSOS, and DSOS matrices}

Similar to the spaces of sparse matrices mentioned in Section~\ref{Section:Preliminaries}, we can define the space of sparse symmetric polynomial matrices whose sparsity pattern is characterized by an undirected graph $\mathcal{G}(\mathcal{V},\mathcal{E})$ as
\begin{multline*}
\mathbb{S}_{n,2d}^{r}(\mathcal{E},0) := \left\{P \in \mathbb{S}[x]_{n,2d}^{r}  \mid  (i,j) \notin \mathcal{E}^* \right.\\
\left. \Rightarrow P_{ij}(x) = P_{ji}(x)=0 \right\}.
\end{multline*}
We can also introduce the subspaces of sparse SOS/SDSOS/DSOS matrices,
\begin{equation}
\begin{aligned}
\SOS_{n,2d}^r(\mathcal{E})  := \SOS_{n,2d}^r \cap \mathbb{S}_{n,2d}^{r \times r}(\mathcal{E},0), \\
\DSOS_{n,2d}^r(\mathcal{E})  := \DSOS_{n,2d}^r \cap \mathbb{S}_{n,2d}^{r \times r}(\mathcal{E},0), \\
\SDSOS_{n,2d}^r(\mathcal{E})  := \SDSOS_{n,2d}^r \cap \mathbb{S}_{n,2d}^{r \times r}(\mathcal{E},0). \\
\end{aligned}
\end{equation}

The Gram matrix representation~\eqref{E:matrixSOS} of a sparse SOS matrix $P \in \SOS_{n,2d}^r(\mathcal{E})$ can be rewritten as
$$
P(x) = \begin{bmatrix}
v_d(x)^\tr Q_{11} v_d(x) & \ldots & v_d(x)^\tr Q_{1r} v_d(x) \\
\vdots & \ddots & \vdots \\
v_d(x)^\tr Q_{r1} v_d(x) & \ldots & v_d(x)^\tr Q_{rr} v_d(x)
\end{bmatrix}
$$
where the $(i,j)$-th block $Q_{ij} \in \mathbb{S}^{N}$ of the Gram matrix is to be chosen such that $Q \succeq 0$ and
\begin{equation} \label{E:ZeroPolynomial}
v_d(x)^\tr Q_{ij} v_d(x) = p_{ij}(x)  = 0 \text{ if } (i,j) \notin \mathcal{E}^*.
\end{equation}
Note that $Q_{ij}$ need not be a zero matrix to satisfy~\eqref{E:ZeroPolynomial}, so the Gram matrix $Q$ for a sparse SOS matrix is dense in general and checking that $P \in \SOS_{n,2d}^r(\mathcal{E})$ can be computationally expensive. For this reason, in~\cite{zheng2018decomposition} the authors proposed to impose sparsity in the Gram matrix $Q$ and test if $P$ belongs to the space of polynomial matrices that admit a sparse SOS decomposition, defined as
\begin{multline} \label{E:SparseMatrixSOS}
\SSOS_{n,2d}^r(\mathcal{E}) := \left\{P \in \SOS_{n,2d}^r(\mathcal{E}) \mid  P(x) \text{~admits a Gram} \right.\\
 \left. \text{matrix~} Q \succeq 0 \text{~with~} Q_{ij} = 0 \text{~when~} p_{ij}(x) = 0 \right\}.
\end{multline}

The following proposition shows that this space is larger than both $\DSOS_{n,2d}^r(\mathcal{E})$ and $\SDSOS_{n,2d}^r(\mathcal{E})$, and is the matrix-valued analogue of Proposition~\ref{Proposition:ScalarInlcusion}.

\begin{proposition} \label{Proposition:MatrixInlcusion}
	For any pattern $\mathcal{E}$, we have
\begin{multline}
            \DSOS_{n,2d}^r(\mathcal{E}) \subset \SDSOS_{n,2d}^r(\mathcal{E}) \\
              \subset \SSOS_{n,2d}^r(\mathcal{E}) \subseteq \SOS_{n,2d}^r(\mathcal{E}).
    \end{multline}
    and the first two inclusions are strict. The third inclusion is strict unless $\mathcal{E}$ is full in which $\SSOS_{n,2d}^r(\mathcal{E}) \equiv \SOS_{n,2d}^r(\mathcal{E})$.
\end{proposition}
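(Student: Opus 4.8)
The plan is to mirror the proof of Proposition~\ref{Proposition:ScalarInlcusion} at the level of the \emph{blocks} of the Gram matrix. The inclusions $\DSOS_{n,2d}^r(\mathcal{E}) \subseteq \SDSOS_{n,2d}^r(\mathcal{E})$ and $\SSOS_{n,2d}^r(\mathcal{E}) \subseteq \SOS_{n,2d}^r(\mathcal{E})$ hold by definition, since every DD matrix is SDD (take the scaling $D=I$) and since $\SSOS_{n,2d}^r(\mathcal{E})$ is introduced in~\eqref{E:SparseMatrixSOS} as a subset of $\SOS_{n,2d}^r(\mathcal{E})$. As in the scalar case, the only substantive inclusion to establish is $\SDSOS_{n,2d}^r(\mathcal{E}) \subset \SSOS_{n,2d}^r(\mathcal{E})$.

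To this end, I would take $P \in \SDSOS_{n,2d}^r(\mathcal{E})$ and let $Q$ be an SDD (but possibly block-dense) Gram matrix in the representation~\eqref{E:matrixSOS}, partitioned into $N \times N$ blocks $Q_{ij}$ with $N = \binom{n+d}{d}$ and $p_{ij}(x) = v_d(x)^\tr Q_{ij} v_d(x)$. I then construct $\hat{Q}$ by setting $\hat{Q}_{ij} = 0$ for every pair with $p_{ij}(x) \equiv 0$, and $\hat{Q}_{ij} = Q_{ij}$ otherwise. Each discarded block contributes the identically-zero polynomial to $P$, so the matrix polynomial represented by $\hat{Q}$ is still $P$, and by construction $\hat{Q}$ obeys the block-sparsity requirement in~\eqref{E:SparseMatrixSOS}. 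The crux is to verify that $\hat{Q}$ remains SDD (hence PSD). This rests on two observations. First, zeroing an off-diagonal block $Q_{ij}$ with $i \neq j$ only erases off-diagonal entries of the full matrix $Q$, because the row and column index ranges of such a block are disjoint; erasing off-diagonal entries leaves scaled diagonal dominance intact, as the same positive diagonal scaling keeps working. Second, a diagonal block $Q_{ii}$ is zeroed only when $p_{ii}(x) \equiv 0$; since $P(x) \succeq 0$ pointwise (being SOS), a vanishing diagonal entry forces the whole $i$-th row and column of $P(x)$ to vanish, hence $p_{ij}(x) \equiv 0$ for all $j$, so the entire $i$-th block row and column of $\hat{Q}$ are zeroed and those rows trivially satisfy diagonal dominance. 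This is exactly the matrix analogue of the row/column argument used in Proposition~\ref{Proposition:ScalarInlcusion}, and it yields $P \in \SSOS_{n,2d}^r(\mathcal{E})$.

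For the strictness claims, the first two inclusions are strict because $\DSOS \subsetneq \SDSOS \subsetneq \SOS$ already at the level of matrices~\cite{ahmadi2017dsos}; explicit witnesses in the sparse cones are obtained by placing a scalar SSOS-but-not-SDSOS (resp.\ SDSOS-but-not-DSOS) polynomial into a single diagonal block, which is compatible with any pattern $\mathcal{E}$ since $(i,i) \in \mathcal{E}^*$. For the third inclusion, when $\mathcal{E}$ is full the complete pattern $\mathcal{E}^*$ forces no off-diagonal block of the Gram matrix to vanish, the block-sparsity constraint in~\eqref{E:SparseMatrixSOS} is inactive, and $\SSOS_{n,2d}^r(\mathcal{E}) \equiv \SOS_{n,2d}^r(\mathcal{E})$; when $\mathcal{E}$ is not full, I would exhibit a sparse SOS matrix all of whose Gram matrices carry a nonzero off-diagonal block even though the associated entry vanishes identically, drawing on the block-decomposition characterization of~\cite{zheng2018decomposition}. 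The main obstacle I anticipate is making the SDD-preservation argument watertight in the block setting---in particular treating the diagonal-block case carefully through the pointwise positive semidefiniteness of $P$---together with producing the explicit witness that certifies strictness of the third inclusion for non-full $\mathcal{E}$.
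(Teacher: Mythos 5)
Your proposal is correct and follows essentially the same route as the paper: take an SDD Gram matrix of $P$, zero out the blocks corresponding to vanishing entries, and observe that this preserves scaled diagonal dominance, hence membership in $\SSOS_{n,2d}^r(\mathcal{E})$. The only (minor) difference is that the paper zeroes blocks indexed by $(i,j)\notin\mathcal{E}^*$ --- so diagonal blocks are never touched and only the off-diagonal-erasure observation is needed --- whereas you zero all blocks with $p_{ij}\equiv 0$ and therefore add the (correct, and slightly more careful) argument for accidentally vanishing diagonal entries via pointwise positive semidefiniteness of $P$.
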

\begin{proof}
	We only need to prove that
	\begin{equation} \label{E:InclusionSDSOS}
	\SDSOS_{n,2d}^r(\mathcal{E},0) \subset \SSOS_{n,2d}^r(\mathcal{E},0),
	\end{equation}
	since the other inclusions follow directly from the definition of each space. To this end, note that any $P \in \SDSOS_{n,2d}^r(\mathcal{E})$ admits a Gram matrix representation~\eqref{E:matrixSOS} with an SDD matrix $Q$. Then, consider the matrix
	$$
	\begin{aligned}
	\hat{Q}_{ij} = \begin{cases}
	Q_{ij}, & \text{if } (i,j) \in \mathcal{E}^*,\\
	0 & \text{if } (i,j) \notin \mathcal{E}^*,
	\end{cases}
	\end{aligned}
	$$
	obtained by setting to zero blocks of $Q$ corresponding to zero entries of $P$.
	The matrix $\hat{Q}$ is still SDD, and hence PSD, and satisfies
	$$P(x) = \left(I_r \otimes v_d(x)\right)^\tr  \hat{Q} \left(I_r \otimes v_d(x)\right).$$
	Hence, $P(x) \in \SSOS_{n,2d}^r(\mathcal{E})$ and~\eqref{E:InclusionSDSOS} is true; the inclusion is strict because there clearly exist polynomial matrices in $\SSOS_{n,2d}(\mathcal{E})$ whose Gram matrix is not SDD. Finally, the identity $\SSOS_{n,2d}^r(\mathcal{E}) \equiv \SOS_{n,2d}^r(\mathcal{E})$ holds obviously when $\mathcal{E}$ contains all edges. 
\end{proof}

As in the scalar case, sparse matrix SOS certificates are expected to be less conservative than their DSOS/SDSOS counterparts.
Additionally, if the sparsity pattern of a sparse polynomial matrix is chordal, then working with $\SSOS_{n,2d}(\mathcal{E})$ can be computationally efficient because---similar to the SSOS decomposition for scalar polynomials---it requires solving SDPs with small PSD cones. This follows from the next theorem, originally proven in~\cite{zheng2018decomposition}, which extends Theorem~\ref{T:ChordalDecompositionTheorem} to sparse polynomial matrices.
\begin{theorem}[\!\!\cite{zheng2018decomposition}]\label{T:SOSDecompositionTheorem}
	Let $\mathcal{G}(\mathcal{V},\mathcal{E})$ be a chordal graph  with maximal cliques $\{\mathcal{C}_1,\mathcal{C}_2, \ldots, \mathcal{C}_t\}$. Then, 
\begin{equation} \label{E:DecompositionSparseCone}
P \in \SSOS_{n,2d}^r(\mathcal{E}) \Leftrightarrow P(x)= \sum_{k=1}^{t} E_{\mathcal{C}_k}^\tr P_k(x) E_{\mathcal{C}_k}
\end{equation}
with $P_k \in \SOS_{n,2d}^{|\mathcal{C}_k|}$ for each $k = 1, \ldots, t$.
\end{theorem}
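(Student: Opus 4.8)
The plan is to reduce the statement to the scalar chordal decomposition of Theorem~\ref{T:ChordalDecompositionTheorem}, applied to the block-structured Gram matrix guaranteed by the definition~\eqref{E:SparseMatrixSOS}. Throughout, write $N = {n+d \choose d}$ for the length of $v_d(x)$, so that the Gram matrix $Q$ in~\eqref{E:matrixSOS} is $rN \times rN$ and is naturally partitioned into $r \times r$ blocks $Q_{ij}$ of size $N \times N$. For the $\Rightarrow$ direction, suppose $P \in \SSOS_{n,2d}^r(\mathcal{E})$. By~\eqref{E:SparseMatrixSOS}, $P$ admits a Gram matrix $Q \succeq 0$ with $Q_{ij} = 0$ whenever $p_{ij}(x) = 0$, and since $P \in \mathbb{S}_{n,2d}^r(\mathcal{E},0)$ this forces $Q_{ij} = 0$ whenever $(i,j) \notin \mathcal{E}^*$. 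In other words, $Q$ is a positive semidefinite matrix whose \emph{block} sparsity pattern is exactly $\mathcal{E}$.

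The key step, and the main technical obstacle, is a block analogue of Theorem~\ref{T:ChordalDecompositionTheorem}: a PSD matrix whose $N \times N$ blocks vanish outside a chordal pattern $\mathcal{E}$ admits the decomposition
$$
Q = \sum_{k=1}^{t} (E_{\mathcal{C}_k} \otimes I_N)^\tr Q_k (E_{\mathcal{C}_k} \otimes I_N), \qquad Q_k \succeq 0 .
$$
I would establish this by applying the scalar Theorem~\ref{T:ChordalDecompositionTheorem} to the ``blown-up'' graph obtained from $\mathcal{G}(\mathcal{V},\mathcal{E})$ by replacing every node $i$ with a clique of $N$ vertices and joining all vertices of block $i$ to all vertices of block $j$ precisely when $(i,j) \in \mathcal{E}$. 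This blown-up graph is chordal whenever $\mathcal{G}$ is (an argument entirely analogous to the chordality of the super-graph $\mathcal{G}^d$ invoked in the proof of Theorem~\ref{T:scalarSOSDecompositionTheorem}), and its maximal cliques are the blow-ups of $\mathcal{C}_1,\ldots,\mathcal{C}_t$, whose index matrices are exactly $E_{\mathcal{C}_k} \otimes I_N$. Carefully verifying the chordality of the blow-up and identifying its maximal cliques is the one place where genuine care is needed; everything else is formal.

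Once the block decomposition is in hand, the remainder is bookkeeping with Kronecker products. Substituting into~\eqref{E:matrixSOS} and using the identities
$$
(E_{\mathcal{C}_k} \otimes I_N)(I_r \otimes v_d(x)) = E_{\mathcal{C}_k} \otimes v_d(x) = (I_{|\mathcal{C}_k|} \otimes v_d(x))\,E_{\mathcal{C}_k},
$$
I would define $P_k(x) := (I_{|\mathcal{C}_k|} \otimes v_d(x))^\tr Q_k (I_{|\mathcal{C}_k|} \otimes v_d(x))$. Since $Q_k \succeq 0$, the Gram characterization~\eqref{E:matrixSOS} shows $P_k \in \SOS_{n,2d}^{|\mathcal{C}_k|}$, and a direct computation then yields $P(x) = \sum_{k=1}^{t} E_{\mathcal{C}_k}^\tr P_k(x) E_{\mathcal{C}_k}$, which is precisely~\eqref{E:DecompositionSparseCone}.

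The $\Leftarrow$ direction is then immediate and requires no chordality. Given a decomposition as in~\eqref{E:DecompositionSparseCone} with each $P_k \in \SOS_{n,2d}^{|\mathcal{C}_k|}$ represented by a Gram matrix $Q_k \succeq 0$, setting $Q = \sum_{k=1}^{t} (E_{\mathcal{C}_k}\otimes I_N)^\tr Q_k (E_{\mathcal{C}_k}\otimes I_N)$ produces a PSD Gram matrix for $P$ (by reversing the Kronecker computation above) whose support lies in $\bigcup_{k} \mathcal{C}_k \times \mathcal{C}_k = \mathcal{E}^*$. Hence $Q_{ij}=0$ whenever $p_{ij}(x)=0$, and $P \in \SSOS_{n,2d}^r(\mathcal{E})$ by~\eqref{E:SparseMatrixSOS}, completing the equivalence.
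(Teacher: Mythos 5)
Your proposal is correct and follows essentially the same route as the paper's (very brief) proof sketch: show that the block-sparsity pattern of the Gram matrix $Q$ inherited from $\mathcal{E}$ is chordal, apply Theorem~\ref{T:ChordalDecompositionTheorem} to decompose $Q$, and translate back via Kronecker-product identities, with the converse obtained by reversing the construction. You usefully fill in the one detail the paper defers to~\cite{zheng2018decomposition}, namely the chordality of the blown-up graph and the identification of its maximal cliques with index matrices $E_{\mathcal{C}_k}\otimes I_N$.
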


\begin{proof}
A detailed proof can be found in~\cite{zheng2018decomposition}. Briefly, the ``if'' part is obvious, while the ``only if'' part relies on the fact that, when $\mathcal{G}(\mathcal{V},\mathcal{E})$ is chordal, the sparse Gram matrix $Q$ of $P$ has a chordal sparsity pattern also, and can be decomposed using Theorem~\ref{T:ChordalDecompositionTheorem} to obtain~\eqref{E:DecompositionSparseCone}.
\end{proof}

Consequently, one can use the cones $\DSOS^r_{n,2d}(\mathcal{E})$, $\SDSOS^r_{n,2d}(\mathcal{E})$, and $\SSOS^r_{n,2d}(\mathcal{E})$ to formulate increasingly better and computationally tractable approximations of large-scale matrix-valued SOS optimization problems of the form
\begin{equation}
\label{E:MatrixSOS}
    \begin{aligned}
        \min_{u}\quad & w^\tr u \\[-1ex]
        \text{subject to} \quad  & P_0(x) + \sum_{i=1}^t u_iP_i(x) \in \SOS^r_{n,2d}(\mathcal{E}),
    \end{aligned}
\end{equation}
where $P_0,\,\ldots,\,P_t \in \mathbb{S}^{r}_{n,2d}(\mathcal{E},0)$ are given sparse symmetric polynomial matrices
and $u \in \mathbb{R}^t$ is the decision variable. All comments given at the end of Section~\ref{Section:ConnectingSSO-SDSOS} on the relation between scalar SSOS/SDSOS/DSOS optimization are also valid for matrix-valued problems.

\subsection{Reduction to the scalar analysis}

Proposition~\ref{Proposition:Matrix&ScalarSOS} states that a matrix-valued polynomial $P(x)$ is SOS (resp., DSOS or SDSOS) if the associated scalar polynomial $p(x,y) = y^\tr P(x)y$ is so, and it is natural to ask if the same is true for the SSOS condition. Here show that applying Theorem~\ref{T:SOSDecompositionTheorem} to $P \in \SSOS_{n,2d}^r(\mathcal{E})$ is indeed equivalent to applying Theorem~\ref{T:scalarSOSDecompositionTheorem} to $p(x,y)$, but only if any correlative sparsity with respect to the variable $x$ is ignored.

Indeed, $p(x,y)$ has correlative sparsity pattern $\mathcal{E}$ with respect to $y$ since the monomial $y_iy_j$ appears if and only if $P_{ij}(x) \neq 0$, meaning that $(i,j) \in {\mathcal{E}}$. It is then not difficult to verify that, if any correlative sparsity with respect to the variable $x$ is ignored, Theorem~\ref{T:scalarSOSDecompositionTheorem} guarantees that $p(x,y)$ can be decomposed as
\begin{align}
p(x,y) &= \sum_{k=1}^t p_k(x,E_{\mathcal{C}_k}y)
\end{align}
for some SOS polynomials $p_k$, $k=1,\,\ldots,\,t$. In particular, each $p_k$ is quadratic in $y$ and has degree $d$ in $x$, so it admits the Gram matrix representation
\begin{equation*}
p_k(x,E_{\mathcal{C}_k}y) = [E_{\mathcal{C}_k}y\otimes v_d(x)]^\tr Q_k [E_{\mathcal{C}_k}y\otimes v_d(x)]
\end{equation*}
with $Q_k \succeq 0$. But then, upon defining
$$V_{d,k} := I_{|\mathcal{C}_k|} \otimes v_d(x)$$
and using the properties of the Kronecker product to rewrite $E_{\mathcal{C}_k}y\otimes v_d(x) = V_{d,k} E_{\mathcal{C}_k}y$, we see that
\begin{align}
p(x,y)
&= \sum_{k=1}^t p_k(x,E_{\mathcal{C}_k}y) \notag \\
&= \sum_{k=1}^t [E_{\mathcal{C}_k}y\otimes v_d(x)]^\tr Q_k [E_{\mathcal{C}_k}y\otimes v_d(x)] \notag\\
&= \sum_{k=1}^t (E_{\mathcal{C}_k}y)^\tr  \underbrace{V_{d,k}^\tr Q_k V_{d,k}}_{=:P_k(x)}  E_{\mathcal{C}_k}y \notag\\
&= y^\tr \left(\sum_{k=1}^t E_{\mathcal{C}_k}^\tr  P_k(x) E_{\mathcal{C}_k}\right)y.
\end{align}
Since $p(x,y) = y^\tr P(x)y$ we conclude that
\begin{equation}
P(x) = \sum_{k=1}^t E_{\mathcal{C}_k}^\tr  P_k(x) E_{\mathcal{C}_k},
\end{equation}
which is exactly the statement of Theorem~\ref{T:SOSDecompositionTheorem}. The argument can easily be reversed to show that Theorem~\ref{T:SOSDecompositionTheorem} implies the existence of an SSOS decomposition of $p(x,y)$.

\begin{remark}
It is important to note that the equivalence outlined above holds \textit{only} when any correlative sparsity of the entries of the polynomial matrix $P$ with respect to $x$ is disregarded, because we do not take it into account in our analysis of polynomial matrices. However, it may be possible to exploit correlative sparsity in $x$ when applying Theorem~\ref{T:scalarSOSDecompositionTheorem} to  $y^\tr P(x) y$. In this case, working at the scalar level will not be equivalent to applying Theorem~\ref{T:SOSDecompositionTheorem} to $P$ directly, and will instead result in a stronger (but possibly computationally cheaper) constraint. Therefore, our results for matrix-valued polynomials remain of independent interest.
\end{remark}

\section{Numerical examples}
\label{Section:Results}

To demonstrate that DSOS/SDSOS constraints are indeed more conservative than sparse SOS conditions in practice, 
we report the results of numerical experiments on sparse versions of the examples considered in~\cite{ahmadi2017dsos}. We implemented sparse SOS conditions in YALMIP~\cite{lofberg2004yalmip}, adapting the undocumented option \texttt{sos.csp} to exploit correlative sparsity using the chordal extension methods described in~\cite{waki2006sums}. For the DSOS/SDSOS constraints, instead, we used SPOTless~\cite{tobenkin2013spotless}. The solver MOSEK~\cite{mosek2010mosek} was used to solve the LPs, SOCPs, and SDPs arising, respectively, from DSOS, SDSOS and SSOS constraints. All computations were carried out on a PC with a 2.8 GHz Intel Core i7 CPU and 8GB of RAM.


\subsection{Lower bounds on scalar polynomials}

\begin{table}
	\centering
	\caption{Optimal $\gamma$ for the SOS/SSOS/SDSOS/DSOS relaxations of problem~\eqref{E:ExamplePop_2}, as a function of the number of variables $n$.}
	\label{Table:POP_bound_1}
	\begin{tabular*}{\linewidth}{@{\extracolsep{\fill}}c| rrrrrrrr}
		\toprule[1pt]
		Dimension $n$ & 10 & 15 & 20 & 30  & 40 & 50  \\
		\midrule
		${\mathcal{P}}_{\text{sos}}$  & 0.00 & 0.00 & 0.00 & * & * & * \\
		${\mathcal{P}}_{\text{ssos}}$ &  0.00 & 0.00  &  0.00 & 0.00 & 0.00 & 0.00 \\
		${\mathcal{P}}_{\text{sdsos}}$ &  44.7 & 46.0  &  46.6 & 47.2 & 44.4 & 47.5  \\
		${\mathcal{P}}_{\text{dos}}$ &  ** & **  &  ** & ** & ** & ** \\
		\bottomrule[1pt]
	\end{tabular*}
	\scriptsize
	\newline
	\raggedright
	\quad*:  Out of memory. \qquad
	**: Infeasible program.
\end{table}

\begin{table}
	\centering
	\caption{CPU time, in seconds, required by MOSEK to solve the SOS/SSOS/SDSOS/DSOS relaxations of problem~\eqref{E:ExamplePop_1}, as a function of the number of variables $n$.}
	\label{Table:POP_time_1}
	\begin{tabular*}{\linewidth}{@{\extracolsep{\fill}}c| rrrrrrrr}
		\toprule[1pt]
		Dimension $n$ & 10 & 15 & 20 & 30  & 40 & 50  \\
		\midrule
		${\mathcal{P}}_{\text{sos}}$   & 1.26 & 22.21 & 326.8 & * & * & * \\
		${\mathcal{P}}_{\text{ssos}}$  &  0.48 & 0.47  &  0.48 & 0.63 & 0.54 & 0.53 \\
		${\mathcal{P}}_{\text{sdsos}}$ &  0.69 & 1.80  &  4.96 & 25.47 & 88.50 & 232.78  \\
		${\mathcal{P}}_{\text{dsos}}$ &  ** & **  &  ** & ** & ** & ** \\
		\bottomrule[1pt]
	\end{tabular*}
	\scriptsize
	\newline
	\raggedright
	\quad*:  Out of memory. \qquad
	**: Infeasible program.
\end{table}

Given the Broyden tridiagonal polynomial
\begin{multline*}
p(x) =  ((3-2x_1)x_1 - 2x_2 + 1)^2   \\
+\sum_{i=2}^{n-1}((3-2x_i)x_i - x_{i-1} -2x_{i+1} + 1)^2  \\
+ ((3-2x_n)x_n - x_{n-1} + 1)^2,
\end{multline*}
consider the best lower bound problem
\begin{equation} \label{E:ExamplePop_1}
\begin{aligned}
\min_{\gamma} \quad & \gamma \\
\text{subject to} \quad & p(x) + \gamma x^\tr x \geq 0 \quad \forall x \in \mathbb{R}^n.
\end{aligned}
\end{equation}
Upon replacing the non-negativity constraint with an SOS/SSOS/SDSOS/DSOS conditions, this problem can be reformulated as an SDP/SDP/SOCP/LP, respectively. The optimal $\gamma$ obtained in each case for different values of $n$ is reported in Table~\ref{Table:POP_bound_1}, and MOSEK's runtime is reported in Table~\ref{Table:POP_time_1}.
For all values of $n$ the cone of DSOS polynomials is too restrictive and the DSOS constraint is infeasible. Moreover, as expected from Proposition~\ref{Proposition:ScalarInlcusion}, the SDSOS condition is more conservative that the SSOS one\footnote{For this and all other problems solved in this paper, the methods of~\cite{ahmadi2015sum,ahmadi2017optimization} are likely to improve the optimal objective value 
compared to the basic SDSOS method used here, but add computational cost.}. For this example, SSOS conditions appear not to introduce any conservativeness: they yield the same optimal value as the classical SOS relaxation, and at a fraction of the computational cost. Interestingly, solving the SSOS conditions was also faster than solving SDSOS conditions. This is likely due to the fact that the SSOS condition translates to an SDP with $n-1$ PSD matrix variables of size $6 \times 6$ for this particular problem~\eqref{E:ExamplePop_1}, while, the number of second-order cones required for an SDSOS constraint is $\mathcal{O}(n^2)$. Whether sparsity can be exploited in SPOTless to formulate a smaller SOCP for sparse SDSOS constraints remains an interesting open question for future work.

\subsection{Eigenvalue bounds on matrix polynomials}

Let $\mathcal{G}(\mathcal{V},\mathcal{E})$ be the $5$-node star graph of Fig.~\ref{F:ChordalGraph}(b), and let $P \in \mathbb{S}_{2,2}^{r \times r}(\mathcal{E},0)$ be a sparse polynomial matrix whose entries are randomly generated quadratic polynomials in $2$ variables. The best lower bound on the smallest eigenvalue of $P(x)$ valid for all $x \in \mathbb{R}^5$ is given by the solution of the optimization problem
\begin{equation} \label{E:ExamplePop_2}
\begin{aligned}
\min_{\gamma}\quad & \gamma \\
\text{subject to} \quad  & P(x) + \gamma I \succeq 0 \quad \forall x \in \mathbb{R}^5.
\end{aligned}
\end{equation}
We solved this problem for $P(x)$ of increasing size $r$ after replacing the PSD constraint with SOS, SSOS, DSOS and SDSOS conditions. The optimal $\gamma$ for each case is reported in Table~\ref{Table:POP_bound_2}, while the CPU time is shown in Table~\ref{Table:POP_time_2}. As in the previous example, SSOS conditions exhibit the best trade-off between conservativeness and computational cost.

\begin{table}
	\centering
	\caption{Optimal $\gamma$ for the SOS/SSOS/SDSOS/DSOS relaxations of problem~\eqref{E:ExamplePop_2}, as a function of the matrix size $r$.}
	\label{Table:POP_bound_2}
	\begin{tabular*}{\linewidth}{@{\extracolsep{\fill}}c| rrrrrr}
		\toprule[1pt]
		 $r$ & 30 & 40  & 50 & 60 & 70 & 80  \\
		\midrule
		${\mathcal{P}}_{\text{sos}}$  & 5.917 & 4.154 & 21.61 & 10.09 & 7.364 & 10.19 \\
		${\mathcal{P}}_{\text{ssos}}$ &  5.917 & 4.498 & 21.64 & 12.71 & 7.558 & 11.39\\
		${\mathcal{P}}_{\text{sdsos}}$  &  1\,254.4 & 145.5 & 762.8 & 1\,521.1 & 1\,217.3 & 598.0 \\
		${\mathcal{P}}_{\text{dsos}}$  &  ** & ** & ** & ** & ** & ** \\
		\bottomrule[1pt]
	\end{tabular*}
	\scriptsize
	\newline
	\vspace{0.1 em}
	\raggedright
	**: Infeasible program.
\end{table}
\begin{table}
	\centering
	\caption{CPU time, in seconds, required by MOSEK to solve the SOS/SSOS/SDSOS/DSOS relaxations of problem~\eqref{E:ExamplePop_2}, as a function of the matrix size $r$.}
	\label{Table:POP_time_2}
	\begin{tabular*}{\linewidth}{@{\extracolsep{\fill}}c| rrrrrr}
		\toprule[1pt]
		 $r$  & 30 & 40  & 50 & 60 & 70 & 80  \\
		\midrule
		${\mathcal{P}}_{\text{sos}}$  & 6.64 & 27.3 & 108.1 & 308.7 & 541.3 & 1\,018.6 \\
		${\mathcal{P}}_{\text{ssos}}$  &  0.35 & 0.35 & 0.33 & 0.32 & 0.32 & 0.33\\
		${\mathcal{P}}_{\text{sdsos}}$  &  1.09 & 1.29 & 2.67 & 3.70 & 4.40 & 6.02 \\
		${\mathcal{P}}_{\text{dsos}}$   &  ** & ** & ** & ** & ** & ** \\
		\bottomrule[1pt]
	\end{tabular*}
	\scriptsize
	\newline
	\raggedright
	\vspace{0.1 em}
	**: Infeasible program.
\end{table}

\subsection{Co-positive programming}

\begin{figure}
	\centering
	\setlength{\abovecaptionskip}{1pt}
	\tikzset{decorate sep/.style 2 args=
		{decorate,decoration={shape backgrounds,shape=circle,shape size=#1,shape sep=#2}}}
	\begin{tikzpicture}[scale=0.65]
	\draw (0,0) rectangle  (3.25,3.25);
	\draw[fill=black!50] (0,3.25) rectangle  (0.75,2.5);
	\draw[fill=black!50] (0.75,2.5) rectangle  (1.5,1.75);
	\draw[fill=black!50] (0,0)--(3.25,0)--(3.25,3.25)--
	(2.75,3.25)--(2.75,0.5)--(0,0.5)--(0,0);
	\draw[decorate sep={0.5mm}{2mm},fill] (1.7,1.55)--
	node[left] {\footnotesize $l$ blocks }(2.55,0.7);
	\draw[<->] (0,3.35) -- node[above] {\footnotesize $e$} (0.75,3.35) ;
	\draw[<->] (-0.1,2.5) -- node[left] {\footnotesize $e$} (-0.1,3.25) ;
	\draw[<->] (2.75,3.35) -- node[above] {\footnotesize $h$} (3.25,3.35) ;
	\draw[<->] (-0.1,0) -- node[left] {\footnotesize $h$} (-0.1,0.5) ;
	\end{tikzpicture}
	\caption{Block-arrow sparsity pattern (dots indicate repeating diagonal blocks). The parameters are: the number of blocks, $l$; block size, $e$; the width of the arrow head, $h$.}
	\label{F:BlockArrow}
\end{figure}

\begin{table}
	\centering
	\caption{Optimal $\gamma$ for the SOS/SSOS/SDSOS/DSOS relaxations of problem~\eqref{E:Copositive} with block size $e = 3$ and arrow head size $h = 2$, as a function of the number of blocks, $l$.}
	\label{Table:CopositiveBound}
	\begin{tabular*}{\linewidth}{@{\extracolsep{\fill}}r| rrrrrrrr}
		\toprule[1pt]
		$l$ & 2 & 4 & 6 & 8 & 10  \\
		\midrule
		${\mathcal{P}}_{\text{sos}}$   & 1.137 & 4.197 & 2.836 & * & * \\
		${\mathcal{P}}_{\text{ssos}}$  &   1.137 & 4.197 & 2.836 & 4.043 & 4.718  \\
		${\mathcal{P}}_{\text{sdsos}}$   &  1.184 & 4.500  &  3.282 & 4.562 & 5.146   \\
		${\mathcal{P}}_{\text{dsos}}$  &  2.551 & 7.775  &  6.452 & 12.057 & 15.203  \\
		\bottomrule[1pt]
	\end{tabular*}
	\scriptsize
	\newline
	\raggedright
	*:  Out of memory.
	
\end{table}

\begin{table}[t]
	\centering
	\caption{CPU time, in seconds, required by MOSEK to solve the SOS/SSOS/SDSOS/DSOS relaxations of problem~\eqref{E:Copositive}. Results are given as a function of the number of blocks, $l$, for block size $e = 3$ and arrow head size $h = 2$.}
	\label{Table:CopositiveTime}
	\begin{tabular*}{\linewidth}{@{\extracolsep{\fill}}r| rrrrrrrr}
		\toprule[1pt]
		$l$ & 2 & 4 & 6 & 8 & 10  \\
		\midrule
		${\mathcal{P}}_{\text{sos}}$  & 0.45 & 7.34 & 248.9 & * & * \\
		${\mathcal{P}}_{\text{ssos}}$  &  0.39 & 0.41  &  0.38 & 0.49 & 0.40  \\
		${\mathcal{P}}_{\text{sdsos}}$  &  0.54 & 1.22  &  4.99 & 11.07 & 32.18   \\
		${\mathcal{P}}_{\text{dsos}}$ &  0.59 & 0.76  &  2.19 & 5.72 & 17.11  \\
		\bottomrule[1pt]
	\end{tabular*}
	\scriptsize
	\newline
	\raggedright
	*:  Out of memory.
\end{table}

Our next experiment is an optimization problem over the cone $\mathbb{CP}^n$ of co-positive $n\times n$ matrices, which has recently attracted attention since it can model several combinatorial optimization problems exactly~\cite{dur2010copositive}. A symmetric matrix $Z \in \mathbb{S}^n$ is co-positive if $y^\tr Z y \geq 0$ for all $y \geq 0$, or~\cite{parrilo2003semidefinite}
$$
Z \in  \mathbb{CP}^n \Leftrightarrow \sum_{i,j=1}^n Z_{ij} x_i^2 x_j^2 \geq 0 \quad  \forall x\in \mathbb{R}^n.
$$
Replacing the non-negativity constraint with SOS, SSOS, SDSOS and DSOS conditions yields tractable inner approximations of $\mathbb{CP}^n$. Here, we solve such approximations for optimization problems of the form
\begin{equation} \label{E:Copositive}
\begin{aligned}
\min_{\gamma} \quad & \gamma \\
\text{subject to} \quad & Z + \gamma I \in \mathbb{CP}^n,
\end{aligned}
\end{equation}
where $Z$ is a randomly generated symmetric matrix with a block-arrow sparsity pattern with $l$ blocks of size $e\times e$, and arrow head $h$; see Fig.~\ref{F:BlockArrow} for an illustration. Such a sparsity pattern is chordal, with $l$ maximal cliques of size $e+h$. We fixed the block size $e = 3$, arrow head size $h = 2$, and varied the number of blocks $l$. Table~\ref{Table:CopositiveBound} shows that the upper bound on the optimal solution of~\eqref{E:Copositive} obtained with SSOS constraints is always strictly better than that obtained with SDSOS and DSOS optimization, and gives the same result as the classical SOS relaxation in all cases for which this could be implemented. Again, SSOS constraints are also extremely competitive in terms of CPU time, cf. Table~\ref{Table:CopositiveTime}.

\subsection{Lyapunov stability analysis} \label{Section:Lyapunov}

As our final example, we considered randomly generated $n$-dimensional, degree-3 polynomial dynamical systems of the form
\begin{equation}
\label{e:degree3SystemStructure}
\begin{cases}
\dot{x}_1  = f_1(x_1,x_2),    \\
\dot{x}_2  = f_2(x_1,x_2,x_3),  \\
\vdots \\
\dot{x}_n  = f_n(x_{n-1},x_n),  \\
\end{cases}
\end{equation}
with a linearly  stable equilibrium at the origin. We then searched for quadratic Lyapunov functions of the form
$$
V(x) = V_1(x_1,x_2) + V_2(x_1,x_2,x_3) +  \ldots + V_n(x_{n-1},x_n)
$$
that certify the nonlinear local stability of the origin for  a subset of initial conditions in the box $\mathcal{D}=[-0.1,0.1]^n$. Specifically, we looked for $V(x)$ that satisfies
\begin{align*}
V(0)&=0,\\
V(x)&\geq \epsilon x^\tr x \quad \forall x \in \mathcal{D},\\
-f(x)^\tr \nabla V(x) &\geq 0 \quad \forall x\in \mathcal{D},
\end{align*}
(where we used $\epsilon = 10^{-6}$ in the simulation) after replacing the non-negativity conditions with SOS, SSOS, SDSOS, and DSOS constraints in turn. Table~\ref{Table:LyapunovTime} lists the CPU time required by MOSEK to construct suitable Lyapunov functions in each case. The classical SOS constraints could not be implemented for $n > 20 $ on our PC due to RAM limitations, while all other constraints could be implemented successfully. Although in this case all of SSOS, SDSOS and DSOS conditions are feasible, the results clearly demonstrate that SSOS are the fastest, with an approximately $200 \times$ speed improvement compared to the DSOS/SDSOS formulations set up by SPOTless when $n=50$.

\begin{table}[t]
	\centering
	\caption{CPU time, in seconds, required by MOSEK to construct a quadratic Lyapunov function for a locally stable, degree-3 polynomial system of the form~\eqref{e:degree3SystemStructure}}
	\label{Table:LyapunovTime}
	\begin{tabular*}{\linewidth}{@{\extracolsep{\fill}}r| rrrrrrrrr}
		\toprule[1pt]
		$n$ & 10 & 15 & 20 & 30 & 40  & 50 \\
		\midrule
		${\mathcal{P}}_{\text{sos}}$  &1.36 & 21.26 & 262.08 & * & * & *\\
		${\mathcal{P}}_{\text{ssos}}$ &  0.57 & 0.69  &  0.76 & 1.02 & 1.22 & 1.41  \\
		${\mathcal{P}}_{\text{sdsos}}$ &  1.21 & 6.78  &  5.20 & 28.61 & 104.36 & 292.05   \\
		${\mathcal{P}}_{\text{dsos}}$ &  0.74 & 1.33 & 2.89 & 14.61 & 61.52 & 275.95  \\
		\bottomrule[1pt]
	\end{tabular*}
	\scriptsize
	\newline
	\raggedright
	*:  Out of memory.
\end{table}

\section{Conclusion}
\label{Section:Conclusion}

In this paper we have demonstrated that, for correlatively sparse polynomials, sparse SOS positivity certificates are more general and typically less conservative than those based on DSOS and SDSOS methods. Key to this result is a new interpretation of the sparse SOS conditions proposed by Waki \textit{et al.}~\cite{waki2006sums} in terms of a sparsity constraint of the Gram matrix $Q$ used to represent sparse SOS polynomials, to which a well known chordal decomposition theorem can be applied. Numerical examples have confirmed our theoretical findings, and also demonstrated that SSOS conditions can be dramatically more efficient than the DSOS/SDSOS conditions formulated by the dedicated package SPOTless. Thus, although DSOS/SDSOS methods remain one of the few methods to implement non-negativity constraints for dense polynomials with many variables and/or high degree, one should try to utilize SSOS constraints when possible.

In the particular context of systems analysis, our findings motivate the development of robust methods that exploit sparsity in the system's governing equations and result in sparse polynomial non-negativity conditions. In Section~\ref{Section:Lyapunov}, we have done this by choosing a Lyapunov function with a structure such that the correlative sparsity of the governing equations is inherited by the eventual non-negativity constraints, but this construction was made possible by the simplicity of our example. Identifying a general procedure to formulate sparse SOS conditions will therefore be essential to enable the analysis of complex, large-scale sparse systems.

\balance
\bibliographystyle{IEEEtran}
\bibliography{Reference}

\end{document}